\title{Brouwer's fan theorem}
\author{Josef Berger}
\def\X{\{0,1\}^\N}
\def\Xs{\{0,1\}^*}
\def\N{\mathbb{N}}
\newcommand{\mr}[1]{\mathrm{#1}}
\newcommand{\av}[1]{\left|{#1}\right|}
\newcommand{\brac}[1]{\left\{{#1}\right\}}
\newcommand{\para}[1]{\left({#1}\right)}
\newcommand{\f}[2]{{\overline{#1}{#2}}}
\newcommand{\deef}{\, : \Leftrightarrow \;}
\def\zo{\{0,1\}}
\def\a{\alpha}
\def\g{\gamma}
\def\b{\beta}
\newcommand{\ca}[1]{{\cal #1}}
\def\cfan{\mr{c}\text{-}\mr{FAN}}
\def\uc{\mr{UC}}
\def\muc{\mr{MUC}}
\def\llpo{\mathrm{LLPO}}
\def\lpl{\mathrm{LPL}}
\theoremstyle{plain}
\def\wkl{\mathrm{WKL}}
\def\fan{\mathrm{FAN}}
\newtheorem{lemma}{Lemma}
\newtheorem{proposition}{Proposition}
\newtheorem{corollary}{Corollary}
\begin{document}
\maketitle
\thispagestyle{empty}

\begin{abstract}
\noindent
Brouwer's fan theorem states that every
bar is a uniform bar.
We give an overview of the status of this axiom in   
Bishop's constructive mathematics.
In particular, we describe the relationship between the fan theorem, the weak K\"onig lemma, and the 
uniform continuity theorem.
\end{abstract}

\textit{Keywords:} Brouwer's fan theorem, the weak K\"onig lemma,
the uniform continuity theorem

\bigskip

2010 Mathematics Subject Classification: 03F60, 03A99, 54E45

\section{Introduction}

We analyse
the relationship between Brouwer's fan theorem for detachable bars, the weak K\"onig lemma, and the uniform continuity
theorem:
\begin{description}
\item[$\fan$] Every detachable bar is a uniform bar.
\item[$\wkl$] Every infinite tree has a path.
\item[$\uc$] Every pointwise continuous function $F:\X \to \N$ is uniformly continuous.
\end{description}

\bigskip

Extending the well-known basic picture
\[
\wkl \Rightarrow \uc \Rightarrow \fan,
\]
we prove the implications of the following diagram:
\[
\begin{array}{ccccccc}
  \wkl & \Rightarrow &\fan \land\mr{DEFU} & \Leftrightarrow &\cfan&\Rightarrow&\fan \\
  &&&&&&\\
  
  & & \Updownarrow &&  \Updownarrow && \\
    &&&&&&\\
& &\fan \land \mr{DECO} & \Leftrightarrow&\uc &&
\end{array}
\]

The axiom $\cfan$ was introduced in \cite{the_logical_pale} as a version of the fan theorem
which is equivalent to $\uc$. The axioms $\mr{DEFU}$ and $\mr{DECO}$ are auxiliary statements clarifying
the gap between $\fan$ and $\cfan$, $\uc$ respectively. 

\bigskip

Mosts results mentioned in this article have been published before, see
\begin{itemize}
\item \cite{hawkl,ishihara_weak} for results about $\wkl$ and $\fan$,
\item \cite{berger_bridges_schuster_pale,berger_ishihara_pale} for results about unique existence,
and 
\item \cite{the_fan_pale,the_logical_pale,dbfr} for results about $\uc$.
\end{itemize}

\section{Notation}

Let $\N=\brac{0,1,2,\ldots}$ denote the set of natural numbers.
We use the variables $k,l,m,n,N,i,j$ for natural numbers.
Let $\Xs$ denote the set of all finite binary sequences, typically denoted by variables like  $u,v,w$.
The symbol $\o$ denotes the empty sequence. Note that $\o \in \Xs$.
Let $\X$ denote the set of all infinite binary sequences. We use the variables $\a,\b,\g$ for elements of $\X$.
The \textit{length} of $u$ is denoted by $\av{u}$. 
We define 
$$\ca{N} = \brac{u \mid \av{u} > 0  \land \forall i < \av{u} \para{u_i=0}}$$
and
$$\ca{E} = \brac{u \mid \av{u} > 0  \land \forall i < \av{u} \para{u_i=1}}.$$
Let $\zo^n = \brac{u \mid \av{u}=n}$ denote the set of all binary sequences of lenght $n$.
The \textit{concatenation} of $u$ and $v$ is denoted by $ u* v$ and defined by
\[
(u_0,\ldots,u_{n-1})* (v_0,\ldots,v_{m-1}) = (u_0,\ldots,u_{n-1},v_0,\ldots,v_{m-1}).
\]
The \textit{concatenation} of $u$ and $\a$ is denoted by $ u* \a$ and defined by
\[
(u_0,\ldots,u_{n-1})* \a  = (u_0,\ldots,u_{n-1},\a_0,\a_1,\a_2,\ldots).
\]  
If $n \le \av{w}$, we denote the \textit{restriction} of $w$ to the first $n$ elements by $\f{w}{n}$. Note that
  $\f{w}{0}  = \o$.
We denote the \textit{restriction} of $\a$ to the first $n$ elements by $\f{\a}{n}$.
We write $u < v$ for $$\av{u} = \av{v}  \land 
\exists k < \av{u} \para{\f{u}{k}=\f{v}{k}  \land  u_k=0  \land v_k=1 }.$$

Fix a subset $A$ of $\Xs$. We define
\begin{itemize}
\item $A^\circ=  \brac{u  \mid \forall w \para{u*w \in A}}$,
\item $\overline{A} =  \brac{u * w \mid u \in A \land w \in \Xs}$, and
\item for any $u$, $A_u =\brac{w \mid u*w \in A}$.
\end{itemize}

A \textit{path}\index{path of a tree} of $A$ is a sequence $\a$ such that
$\forall n \para{\f{\a}{n} \in A}$. The set $A$ hat \textit{at most one path}\index{at most one path} if
\[
\forall \a,\b\para{ \exists n \para{\a_n  \ne \b_n}  \Rightarrow 
\exists n
\para{
\f{\a}{n} \notin A 
  \lor  \f{\b}{n} \notin A  } }.
\]
A \textit{longest path}\index{longest path}  of $A$ is a sequence $\a$ such that
$\forall
u  \in A \para{\f{\a}{\av{u}} \in A}$. 
Note that each path of $A$ is a longest path of $A$ but not necessarily vice versa.

\bigskip

The set $A$ is said to be
\begin{itemize}
\item \textit{detachable}\index{detachable set} (from $\Xs$) if $$\forall u \para{u \in A   \lor  u \notin A};$$
\item a \textit{c-set}\index{c-set} if there exists a detachable set $D \subseteq \Xs$ such that
$A=D^\circ$;
\item \textit{closed under extension} if
\[
\forall u  \in A \, \forall w \para{u*w \in A};
\]
\item \textit{closed under restriction} if
\[
\forall u,w  \para{u*w \in A \Rightarrow  u \in A};
\]
\item a \textit{tree}\index{tree} if it is detachable and closed under restriction;
\item \textit{infinite}\index{infinite tree} if
  $$\forall n \, \exists u \para{\av{u}=n \land u \in A};$$
\item a \textit{bar}\index{bar} if $$\forall \a \, \exists n \para{\f{\a}{n} \in A};$$
\item a \textit{uniform bar}\index{uniform bar} if
$$\exists N \, \forall \a \, \exists n \le N \para{\f{\a}{n} \in A};$$
\item \textit{convex}\index{convex set} if
\[
\forall u,w \in A \, \forall v \para{u < v < w  \Rightarrow  v \in A};
\]
\item \textit{co-convex}\index{co-convex set} if $\brac{u \mid u \notin A}$ is convex.
\end{itemize}

\section{The weak K\"onig lemma}

The \textit{weak K\"onig lemma}\index{weak K\"onig lemma} is the following axiom:
\begin{description}
\item[$\wkl$] Every infinite tree has a path.
\end{description}

\begin{lemma}\label{lpp} Let $T$ be a tree. The following are equivalent:
\begin{enumerate}[(a)]
\item $T$ is an infinite tree.
\item Every longest path of $T$ is a path of $T$.
\end{enumerate}  
\end{lemma}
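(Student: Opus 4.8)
The plan is to prove the two implications separately; the first is routine. For (a) $\Rightarrow$ (b), I would argue as follows: suppose $T$ is an infinite tree and let $\a$ be a longest path of $T$. To see that $\a$ is a path, fix $n$, use infinitude of $T$ to obtain some $u \in T$ with $\av u = n$, and apply the defining property of a longest path to this $u$: this gives $\f{\a}{n} = \f{\a}{\av u} \in T$. As $n$ was arbitrary, $\a$ is a path.

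For (b) $\Rightarrow$ (a) the essential preliminary observation is that, since $T$ is detachable and each $\zo^n$ is finite, the statement ``$\exists u \in \zo^n\para{u \in T}$'' is decidable for every $n$, being a finite disjunction of decidable statements. Hence, to prove that $T$ is infinite it suffices, for a fixed $n$, to derive a contradiction from the assumption that no $u$ of length $n$ lies in $T$. I would first record that $\o \in T$ holds under (b): otherwise $T$ is empty by closure under restriction, so every sequence is vacuously a longest path, and (b) would make every sequence a path --- absurd, since $\o \notin T$.

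Now assume that no $u$ of length $n$ lies in $T$. By closure under restriction, no $u$ of length $\ge n$ lies in $T$ either, so $M := \brac{\av v \mid v \in T}$ is a decidable subset of $\brac{0,\dots,n-1}$; it is inhabited because $\o \in T$, hence it has a greatest element $m$, with $0 \le m < n$. Choose $u_0 \in T$ with $\av{u_0} = m$ --- possible, since membership in the finite set $\zo^m$ is decidable --- and put $\a := u_0 * \g$ for an arbitrary $\g \in \X$. Then $\a$ is a longest path: every $v \in T$ satisfies $\av v \le m = \av{u_0}$, so $\f{\a}{\av v} = \f{u_0}{\av v} \in T$ by closure under restriction. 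By (b), $\a$ is therefore a path, so $\f{\a}{n} \in T$ --- contradicting the assumption that no $u$ of length $n$ lies in $T$. This is the contradiction we wanted.

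The main obstacle is the direction (b) $\Rightarrow$ (a), and the point that needs care is that one cannot build a longest path of $T$ by a naive greedy descent: such a path may run into a dead end while $T$ continues along another branch, so it need not be a longest path at all. The argument above circumvents this by first using decidability of level-occupancy to reduce to the case where $T$ is bounded, in which the explicit construction ``descend to a deepest node of $T$, then continue arbitrarily'' does yield a longest path.
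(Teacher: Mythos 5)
The paper states this lemma without giving a proof, so there is nothing to compare your argument against; I will therefore assess it on its own.

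Your proof is correct. The direction (a) $\Rightarrow$ (b) is exactly the routine unfolding of the two definitions. For (b) $\Rightarrow$ (a), you correctly exploit the decidability (via detachability of $T$ and finiteness of each level $\zo^n$) of the statement ``$\exists u \in \zo^n\para{u \in T}$'' so that a reductio is constructively legitimate, and you handle the one genuinely delicate point: under the hypothesis that no node of $T$ has length $n$, you can compute the maximal occupied level $m < n$ and produce an explicit longest path $u_0 * \g$; (b) then forces $\f{u_0*\g}{n} \in T$, a contradiction. The preliminary observation that (b) forces $\o \in T$ (otherwise $T = \emptyset$, every $\a$ is vacuously a longest path, but no $\a$ is a path) is exactly what is needed to guarantee that the set $M = \brac{\av{v} \mid v \in T}$ is inhabited so that its maximum exists. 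Your closing remark is also apt: one cannot construct a longest path of an arbitrary tree without something like $\lpl$, and the reductio sidesteps this by reducing to the bounded case, where a longest path is computable. I see no gaps.
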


The notion of longest path is commonly used in graph theory.
In the context of constructive mathematics, it was introduced in \cite{ishihara_weak},
along with the \textit{longest path lemma}\index{longest path lemma}:

\begin{description}
\item[$\lpl$] Every tree has a longest path.
\end{description}

When working with trees and longest paths, it is convenient to
have at hand an algorithm  which transforms  an arbitrary tree $T$ into an infinite tree $T'$,
where $T'$ is as closely related to $T$ as possible.
Given a tree $T$, we define subsets $L_T$ and $T'$ of $\Xs$ by 
\[
u \in L_T \Leftrightarrow
\para{u \in T \land \forall w \para{\para{\av{u} < \av{w} \lor u<w} \Rightarrow w \notin T}
\lor \para{\o \notin T \land u = \o}
}
\]
and 
\[
u \in T'  \Leftrightarrow 
u \in T \cup \brac{\o}  \lor  \exists v \in L_T \, \exists w \in \ca{N} \para{u=v*w}. 
\]

Note that
\begin{itemize}
\item if $T = \emptyset$ then $L_T=\brac{\o}$ and $T'=\cal{N}\cup \brac{\o}$;
\item if $T$ is infinite, then $L_T=\emptyset$ and $T'=T$;
\item if $\o \in T$ and the set
\[
\brac{\av{u} \mid u \in T}
\]
  is bounded, then
   $L_T=\brac{u}$, where $u$ is an element of $T$ of maximal length. 
\end{itemize}  

\begin{proposition}\label{uxuxux} For every tree $T$ there is an infinite tree $T'$ such that
\begin{enumerate}[(a)]
\item $T \subseteq T'$;    
\item if $T$ is infinite, then $T=T'$;
\item if $T$ is convex, then $T'$ is convex as well;
\item if $T$ is finite, then $T'$ has at most one path;
\item every path of $T'$ is a longest path of $T$;
\item\label{lppd} if $T$ has at most one path, then $T'$ has at most one path.
\end{enumerate}
\end{proposition}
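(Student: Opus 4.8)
The plan is to take $T'$ exactly as defined just above the proposition and to verify that it is an infinite tree, and then that it satisfies each of (a)--(f).

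\textbf{$T'$ is an infinite tree.} Detachability of $L_T$, hence of $T'$, reduces to bounded search: since $T$ is closed under restriction, the clause ``$\av u<\av w\Rightarrow w\notin T$'' in the definition of $L_T$ is equivalent to ``$w\notin T$ for every $w$ of length $\av u+1$'', and ``$u<w\Rightarrow w\notin T$'' only quantifies over the finite set $\zo^{\av u}$; closure of $T'$ under restriction is a short case distinction on its defining disjunction. For infiniteness, given $n$ I would decide whether $\zo^n\cap T$ is inhabited: if so, any of its elements works; if not, decide whether $\o\in T$. If $\o\notin T$ then $T=\emptyset$, so $\o\in L_T$ and the zero sequence of length $n$ lies in $T'$. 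If $\o\in T$, let $n_0<n$ be the largest level $\le n$ at which $T$ is inhabited and $v$ the $<$-largest member of $\zo^{n_0}\cap T$; maximality of $n_0$ together with closure under restriction, plus $v$ being rightmost, gives $v\in L_T$, so $v$ followed by $n-n_0$ zeros is an element of $T'$ of length $n$.

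\textbf{Clauses (a)--(e).} (a) is immediate. For (b), if $T$ is infinite then $L_T=\emptyset$ --- the first disjunct of $u\in L_T$ would put $u\in T$ while barring every sequence of length $\av u+1$ from $T$, contradicting infiniteness, and the second disjunct denies $\o\in T$, which infiniteness entails --- so $T'=T\cup\brac{\o}=T$. For (e), let $\a$ be a path of $T'$ and $u\in T$ with $k=\av u$; from $\f\a k\in T'$ we get $\f\a k\in T$, or $\f\a k=\o$ (then $k=0$ and $\f\a 0=\o=u\in T$), or $\f\a k=p*w$ with $p\in L_T$ of length below $k$, and the last is impossible because the defining property of $L_T$ applied to $u$ (which is longer than $p$) contradicts $u\in T$; hence $\f\a k\in T$ and $\a$ is a longest path of $T$. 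For (c) and (d) one uses that ``convex'' and ``has at most one path'' are level-wise conditions. For (c), fix $n$ and $u<v<w$ with $u,w\in T'$ of length $n$ (the case $n=0$ being trivial), and decide whether some $p\in L_T$ has length below $n$: if so then $\zo^n\cap T=\emptyset$, so $u$ and $w$ are each the zero extension of $p$ to length $n$ (using that $L_T$ has at most one element), whence $u=w$ and $u<v<w$ is absurd; if not then $\zo^n\cap T'=\zo^n\cap T$ and convexity of $T$ gives $v\in T\subseteq T'$. For (d), if $T$ is finite then either $T=\emptyset$ and $T'=\ca N\cup\brac{\o}$, whose only path is the zero sequence, or $\o\in T$ and $L_T=\brac{v_1}$ with $v_1$ the $<$-largest element of maximal length, so that every initial segment of length past $\av{v_1}$ staying in $T'$ equals the zero extension of $v_1$; in either case, given $\a_{n_1}\ne\b_{n_1}$, comparing $\f\a m$ and $\f\b m$ at an $m$ above $n_1$ (and, in the second case, above $\av{v_1}$) exhibits $\f\a m\notin T'$ or $\f\b m\notin T'$.

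\textbf{Clause (f), the main obstacle.} One cannot decide whether $T$ is finite, so one cannot simply quote (d); instead finiteness must be extracted exactly where it is used. Suppose $T$ has at most one path and $\a,\b$ differ at some $n_1$. Feeding $\a,\b$ into the ``at most one path'' hypothesis for $T$ yields an $m$, which we may take above $n_1$ (by closure under restriction), with $\f\a m\notin T$ or $\f\b m\notin T$; by symmetry assume the former. Decide whether $\f\a m\in T'$: if not, we are done at level $m$; if so, then since $\f\a m\notin T$ and $\f\a m\ne\o$, the definition of $T'$ forces $\f\a m=p*w$ with $p$ a proper prefix of $\f\a m$ lying in $L_T$, and unfolding $p\in L_T$ --- which gives $T\subseteq\brac{\o}$ when $p=\o$, and $T$ of height at most $\av p$ when $p\ne\o$ --- shows that $T$ is finite, so (d) applies to this same $T'$ and supplies the required level. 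Every case split above is on a decidable predicate (membership in the detachable sets $T$, $T'$, $L_T$, or a bounded search over a single level), so the whole argument stays constructive; the one delicate point is precisely this use of (d) inside (f), which is legitimate only because it occurs in the branch in which finiteness of $T$ has actually been established.
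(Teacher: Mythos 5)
Your proof is correct and proceeds exactly as the paper intends: the paper defines $L_T$ and $T'$ and then states Proposition \ref{uxuxux} without giving the verification, which you supply directly. The one genuinely delicate point is clause (f), where an undecidable case split on whether $T$ is finite must be avoided; your move of first invoking at-most-one-path for $T$ to obtain a level $m$, deciding $\f{\a}{m}\in T'$, and extracting finiteness of $T$ (and hence the applicability of (d)) only in the branch where $\f{\a}{m}\in T'\setminus T$ forces a witness $p\in L_T$, is precisely the right resolution and keeps the argument constructive throughout.
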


The \textit{lesser limited principle of omniscience}\index{lesser limited principle of omniscience} is the following axiom:

\begin{description}
\item[$\llpo$] For all $\a$,
  $$\forall n,m \para{\a_n = \a_m = 1 \Rightarrow n=m}  \Rightarrow \forall n \para{\a_{2n} = 0}
  \lor  \forall n \para{\a_{2n+1} = 0}.$$
\end{description}

\begin{proposition}\label{llpotowkl} $\llpo \Rightarrow \wkl$
\end{proposition}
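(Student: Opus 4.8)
The plan is to construct a path through a given infinite tree step by step, deciding at each node in which direction to continue by an appeal to $\llpo$, and finally collecting the choices into a single sequence by dependent choice (available in Bishop's mathematics). So fix an infinite tree $T$. For $u \in \Xs$ write
\[
I(u) \deef \forall m \, \exists v \para{\av{v}=m \land u*v \in T}.
\]
Then $I(\o)$ is exactly the hypothesis that $T$ is infinite, and $I(u)$, applied with $m=0$, yields $u \in T$. Also put $B(u) \deef \exists m \, \forall v\para{\av{v}=m \Rightarrow u*v \notin T}$. Since $T$ is detachable, for each $m$ the bounded statement $\forall v\para{\av{v}=m \Rightarrow u*v \notin T}$ is decidable, and by De Morgan for decidable bounded quantifiers one gets $\neg B(u) \Leftrightarrow I(u)$.

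First I would record the following consequence of $T$ being closed under restriction: for every $u$,
\[
I(u) \Rightarrow \neg \para{B(u*0) \land B(u*1)}.
\]
Indeed, if $m_0$ witnesses $B(u*0)$ and $m_1$ witnesses $B(u*1)$, take $M = \max\para{m_0,m_1}$; by $I(u)$ there is $v \in \zo^{M+1}$ with $u*v \in T$, its first entry is $0$ or $1$, and restricting $u*v$ appropriately (using closure under restriction) produces an element of $T$ of the form $u*0*s$ with $\av{s}=m_0$, or of the form $u*1*s$ with $\av{s}=m_1$ — contradicting the choice of $m_0$ or of $m_1$.

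Next comes the role of $\llpo$. Fix $u$ with $I(u)$ and abbreviate $\theta_c(m) \deef \forall v\para{\av{v}=m \Rightarrow u*c*v \notin T}$ for $c \in \zo$, so that $B(u*c) \Leftrightarrow \exists m\,\theta_c(m)$; each $\theta_c(m)$ is decidable in $m$, and so is the statement that $m$ is least with $\theta_0(m)\lor\theta_1(m)$. Define $\b \in \X$ by setting $\b_{2m}=1$ precisely when $m$ is least with $\theta_0(m)\lor\theta_1(m)$ and $\theta_0(m)$ holds, $\b_{2m+1}=1$ precisely when $m$ is least with $\theta_0(m)\lor\theta_1(m)$ and $\theta_0(m)$ fails, and all remaining entries $0$. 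Then $\b$ has at most one entry equal to $1$, so $\llpo$ yields $\forall m\para{\b_{2m}=0}$ or $\forall m\para{\b_{2m+1}=0}$. In the first case $\neg\exists m\,\theta_0(m)$, hence $I(u*0)$: for if $\exists m\,\theta_0(m)$, the least $m^*$ with $\theta_0(m^*)\lor\theta_1(m^*)$ exists, and $\theta_0(m^*)$ would force $\b_{2m^*}=1$, while $\neg\theta_0(m^*)$ gives $\theta_1(m^*)$ and hence $B(u*0)\land B(u*1)$, against the displayed implication above. Symmetrically, the second case yields $I(u*1)$. Thus $u$ always has a child $u*c$ with $I(u*c)$, and this child can be read off canonically from the disjunct delivered by $\llpo$.

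Finally, starting from $u_0=\o$ and iterating — formally, by dependent choice — I obtain a sequence $u_0,u_1,u_2,\ldots$ in $\Xs$ with $\f{u_{n+1}}{n}=u_n$, $\av{u_n}=n$ and $I(u_n)$ for all $n$; then $\a$ defined by $\a_n = \para{u_{n+1}}_n$ satisfies $\f{\a}{n}=u_n\in T$ for every $n$, so $\a$ is a path of $T$. The only genuinely non-trivial point is the move from $I(u)\Rightarrow\neg\para{B(u*0)\land B(u*1)}$ to the honest disjunction $I(u*0)\lor I(u*1)$; everything else is bookkeeping with detachability, closure under restriction, and De Morgan for bounded quantifiers. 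I do not anticipate a real obstacle beyond taking care that the sequence fed to $\llpo$ is legitimately defined and that the choice principle used to assemble $\a$ is made explicit.
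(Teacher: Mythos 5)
Your proof is correct and takes essentially the same approach as the paper: you keep the invariant that the subtree below the current node is still infinite (your $I(u)$ is exactly membership in the paper's $Z_T$), apply $\llpo$ at each node to a sequence with at most one nonzero entry to decide which child to descend to, and assemble the path by iteration/dependent choice. The only cosmetic difference is that you build the at-most-one-$1$ sequence $\b$ directly, whereas the paper first defines $\b$ from $\zeta_T$ and then passes to an auxiliary $\a$ that records the first index where $\b$ is $1$; both serve the same purpose.
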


\begin{proof} Let $T$ be an infinite tree. Let $\zeta_T(u,m)$ be defined by
\[
\zeta_T(u,m) \Leftrightarrow \exists w  \in \brac{0,1}^m \para{u * w \in T}.
\]
Moreover, set
\[
Z_T= \brac{u \mid \forall m \, \zeta_T(u,m)}.
\]

Define $\b$ by
\[
\b_{2n} = 0  \Leftrightarrow \zeta_T((0),n)
\]
and
\[
\b_{2n+1} = 0  \Leftrightarrow \zeta_T((1),n)
\]
and define $\a$ by
\[
\a_{n} = 1  \Leftrightarrow \b_{n} = 1 \land \forall m < n \para{\b_{m} = 0}.
\]
By $\llpo$, either
\begin{equation}\label{ichi}
\forall n \para{ \a_{2n} = 0 }
\end{equation}
or else
\begin{equation}\label{ni}
\forall n \para{ \a_{2n+1} = 0 }. 
\end{equation}

If \eqref{ichi} holds, we can conclude that
\[
\forall n \para{ \b_{2n} = 0 },
\]
which implies that $(0) \in Z_T$.

\bigskip

Analogously, we can show that \eqref{ni} implies that $(1) \in Z_T$.

\bigskip

Iterating this procedure yields a path $\g$ of $Z_T$.
Since $Z_T \subseteq T$, the sequence $\g$ is also  a path of $T$.

\end{proof}  
  
\begin{proposition}\label{notre} The following are equivalent:
\begin{enumerate}[(a)]
\item $\wkl$
\item $\lpl$
\item Every convex tree has a longest path.
\item Every infinite convex tree has a path.
\item $\llpo$
\end{enumerate}
\end{proposition}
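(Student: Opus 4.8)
The plan is to run the cycle of implications $(a)\Rightarrow(b)\Rightarrow(c)\Rightarrow(d)\Rightarrow(e)\Rightarrow(a)$. The implication $(e)\Rightarrow(a)$ is Proposition~\ref{llpotowkl}, so there is nothing to prove there. For $(a)\Rightarrow(b)$, I would take an arbitrary tree $T$, pass to the infinite tree $T'$ furnished by Proposition~\ref{uxuxux}, apply $\wkl$ to obtain a path of $T'$, and note that by Proposition~\ref{uxuxux}(e) this path is a longest path of $T$; hence $\lpl$. The step $(b)\Rightarrow(c)$ is immediate, since a convex tree is in particular a tree. For $(c)\Rightarrow(d)$, given an infinite convex tree $T$, statement $(c)$ provides a longest path of $T$, and, $T$ being infinite, Lemma~\ref{lpp} tells us that this longest path is a path.

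The crux is $(d)\Rightarrow(e)$, and the plan is to code an instance of $\llpo$ into an infinite convex tree. Assume $(d)$, and let $\a$ be given with $\forall n,m\para{\a_n=\a_m=1\Rightarrow n=m}$; we must derive $\forall n\para{\a_{2n}=0}\lor\forall n\para{\a_{2n+1}=0}$. Define $T\subseteq\Xs$ by putting $\o\in T$ and, for $u$ with $\av{u}=n\ge 1$, declaring $u\in T$ precisely when one of the following holds:
\begin{itemize}
\item $u_0=0$, $u_i=1$ for all $i$ with $1\le i<n$, and $\a_i=0$ for every even $i<n$;
\item $u_0=1$, $u_i=0$ for all $i$ with $1\le i<n$, and $\a_i=0$ for every odd $i<n$.
\end{itemize}
Membership of $u$ in $T$ is decided by inspecting $\a_0,\dots,\a_{n-1}$, so $T$ is detachable, and it is clearly closed under restriction, hence a tree. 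It is convex: its intersection with $\zo^n$ is always contained in $\brac{0\,1^{n-1},\,1\,0^{n-1}}$, and these two words represent consecutive natural numbers, so no word lies strictly between them and the convexity condition holds vacuously at every level. It is infinite: given $n$, inspect $\a_0,\dots,\a_{n-1}$; if all of them are $0$ then $0\,1^{n-1}\in T$; if the unique $1$ among them occupies an even position then $1\,0^{n-1}\in T$; and if it occupies an odd position then $0\,1^{n-1}\in T$. (This is the one place where the hypothesis on $\a$ enters: it makes these three cases exhaustive.)

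By $(d)$ the tree $T$ has a path $\b$. Since $\b_0\in\zo$, we distinguish two cases. If $\b_0=0$, then for every $n\ge 1$ the word $\f{\b}{n}\in T$ begins with $0$, so it cannot meet the second clause; hence it meets the first, that is $\f{\b}{n}=0\,1^{n-1}$, which forces $\a_i=0$ for all even $i<n$; letting $n$ range over $\N$ yields $\forall n\para{\a_{2n}=0}$. If $\b_0=1$, the symmetric argument yields $\forall n\para{\a_{2n+1}=0}$. This is $\llpo$. The point I expect to be the genuine obstacle is finding this tree: convexity forces every level to be an interval, which rules out the ``two widely separated branches'' trees one would otherwise use to get $\llpo$ out of $\wkl$; the width-two tree above is thin enough to be convex automatically, while its two arms $0\,1\,1\,1\cdots$ and $1\,0\,0\,0\cdots$ are pruned exactly according to whether $\a$ carries its single $1$ at an even or an odd position, so that the bare existence of a path decides the dichotomy.
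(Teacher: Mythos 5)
Your proof is correct and follows essentially the same route as the paper: the chain $(a)\Rightarrow(b)\Rightarrow(c)\Rightarrow(d)\Rightarrow(e)\Rightarrow(a)$, with $(a)\Rightarrow(b)$ via Proposition~\ref{uxuxux}, $(c)\Rightarrow(d)$ via Lemma~\ref{lpp}, $(e)\Rightarrow(a)$ via Proposition~\ref{llpotowkl}, and, for the crucial $(d)\Rightarrow(e)$, a width-two convex tree whose arms $0\,1\,1\,1\cdots$ and $1\,0\,0\,0\cdots$ are pruned according to the parity of the position at which $\a$ might carry its single $1$. The paper's tree $S$ (with $S_0$ reading $\a_{2i}$ for $i\le\av{w}$ and $S_1$ reading $\a_{2i+1}$) differs from yours only in the exact bookkeeping of how far into $\a$ each level inspects; the structural idea and the case split on the first bit of the resulting path are the same.
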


\begin{proof} ``$(a)\Rightarrow (b)$'' This follows from Proposition \ref{uxuxux}.
  
\bigskip

``$(c)\Rightarrow (d)$'' Let $T$ be an infinite convex tree.
By $(c)$, the tree $T$ has a longest path $\a$.
By Lemma \ref{lpp}, the sequence $\a$ is a path of $T$.

\bigskip

``$(d)\Rightarrow (e)$'' Fix a sequence $\a$ with
\[
\forall n,m \para{ \a_n = \a_m = 1 \Rightarrow n=m}.
\]
We define an infinite convex tree $S$ by
\[
S  = \brac{\o,(0),(1)} \cup S_0  \cup S_1,
\]
where
\[
S_0 =  \brac{(0)*w \mid w \in \ca{E} \land \forall i \le \av{w} \para{\a_{2i}=0}},
\]
\[
S_1 =  \brac{(1)*w \mid w \in \ca{N} \land \forall i \le \av{w} \para{\a_{2i+1}=0}}.
\]
Let $\b$ be a path of $S$.
If $\b_1=0$, then $\forall i \para{\a_{2i}=0}$. If $\b_1=1$, then $\forall i \para{\a_{2i+1}=0}$.

\bigskip

``$(e)\Rightarrow (a)$''  This is Proposition \ref{llpotowkl}.

\end{proof}

Proposition \ref{llpotowkl} and
Proposition \ref{notre} (without the parts about convexity) were proved in \cite{hawkl} and \cite{ishihara_weak}.

\section{The fan theorem}

\textit{Brouwer's fan theorem}\index{Brouwer's fan theorem} is the following axiom:

\begin{description}
\item[$\fan$] Every detachable bar is a uniform bar.
\end{description}

\begin{lemma}\label{cava} If $B \subseteq \Xs$ is closed under extension, the following are equivalent:
\begin{enumerate}[(a)]  
\item $B$ is a uniform bar.
\item $\exists n \para{\zo^n \subseteq B}$
\end{enumerate}
\end{lemma}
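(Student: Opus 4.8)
The plan is to prove the two implications separately, observing in advance that the hypothesis ``$B$ closed under extension'' is needed only for $(a) \Rightarrow (b)$, and that the witness $N$ for the uniform bar and the witness $n$ for $(b)$ will turn out to be the same number.

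For $(b) \Rightarrow (a)$ I would argue directly: assume $\zo^n \subseteq B$ and claim that $N=n$ shows $B$ is a uniform bar. Given any $\a$, the string $\f{\a}{n}$ has length $n$, hence lies in $\zo^n \subseteq B$, and trivially $n \le N$. So $\forall \a \, \exists n \le N \para{\f{\a}{n} \in B}$. Nothing about closure under extension is used here.

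For $(a) \Rightarrow (b)$, fix $N$ with $\forall \a \, \exists n \le N \para{\f{\a}{n} \in B}$ and claim $\zo^N \subseteq B$. Take an arbitrary $u$ with $\av{u}=N$ and apply the uniform-bar property to the extension $\a = u * \b$, where $\b$ is, say, the constant-$0$ sequence; this yields some $n \le N$ with $\f{\a}{n} \in B$. Since $n \le N = \av{u}$, we have $\f{\a}{n} = \f{u}{n}$, so $\f{u}{n} \in B$, and writing $u = \f{u}{n} * w$ with $w = (u_n, \ldots, u_{N-1})$, closure under extension gives $u \in B$. As $u$ ranged over all of $\zo^N$, this is exactly $(b)$ with the witness $n = N$.

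The only real point of the argument — the place worth pausing over rather than an obstacle per se — is the last step of $(a)\Rightarrow(b)$: the uniform bar only guarantees that some \emph{initial segment} $\f{u}{n}$ of $u$ is in $B$, and one needs closure under extension to promote this to $u$ itself; a counterexample such as $B = \brac{(0),(1,0),(1,1)}$ shows the implication fails without it. Everything else is routine bookkeeping about lengths and restrictions.
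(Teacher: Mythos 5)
The paper states Lemma~\ref{cava} without proof, treating it as routine, so there is no proof of record to compare against; judged on its own merits, your argument is correct and is exactly the argument the paper expects the reader to supply. Both directions are sound, the observation that closure under extension is needed only for $(a)\Rightarrow(b)$ is accurate, and the counterexample $B=\brac{(0),(1,0),(1,1)}$ (a uniform bar with no level $n$ satisfying $\zo^n\subseteq B$) correctly shows the hypothesis cannot be dropped.
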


The next lemma enables us to switch freely between a set $B$ itself and $\overline{B}$, when discussing bar-related properties.
This technique was introduced in \cite{ishihara_weak}.

\begin{lemma}\label{vier} Let $B$ be a subset $\Xs$. Then
\begin{enumerate}[(a)]
\item $\overline{B}$ is closed under extension,
\item $B \subseteq \overline{B}$,
\item $B$ is detachable  $\,\,\, \Rightarrow \,\,\,$ $\overline{B}$ is detachable,
\item $B$ is a bar $\,\,\, \Rightarrow \,\,\,$ $\overline{B}$ is a bar,
\item $\overline{B}$ is a uniform bar $\,\,\, \Rightarrow \,\,\,$ $B$
  is a uniform bar.
\end{enumerate}
\end{lemma}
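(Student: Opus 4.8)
The plan is to verify the five assertions essentially by unwinding the definition $\overline{B} = \{u*w \mid u \in B \land w \in \Xs\}$. Part (a) is immediate: if $x \in \overline{B}$, then $x = u*w$ with $u \in B$, and for any $v$ we have $x*v = u*(w*v)$ with $w*v \in \Xs$, so $x*v \in \overline{B}$. Part (b) follows by taking $w = \o$, using that $u*\o = u$ and $\o \in \Xs$.

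For part (c), suppose $B$ is detachable and fix $u$. I would decide membership of $u$ in $\overline{B}$ by examining the finitely many prefixes $\f{u}{0}, \f{u}{1}, \dots, \f{u}{\av{u}}$ of $u$: since $B$ is detachable, I can decide for each $k \le \av{u}$ whether $\f{u}{k} \in B$. If some $\f{u}{k} \in B$, then writing $u = \f{u}{k} * w$ for the appropriate $w$ shows $u \in \overline{B}$; conversely, if $u \in \overline{B}$, say $u = v*w$ with $v \in B$, then $v$ is a prefix of $u$, so $v = \f{u}{k}$ for $k = \av{v} \le \av{u}$, and that prefix is in $B$. Hence $u \in \overline{B}$ iff $\exists k \le \av{u}\,(\f{u}{k} \in B)$, which is a decidable disjunction over a finite range.

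For part (d), if $B$ is a bar and $\a$ is arbitrary, there is $n$ with $\f{\a}{n} \in B \subseteq \overline{B}$ by (b), so $\overline{B}$ is a bar. For part (e), suppose $N$ witnesses that $\overline{B}$ is a uniform bar, i.e. $\forall \a\,\exists n \le N\,(\f{\a}{n} \in \overline{B})$. Given $\a$, pick such an $n \le N$; then $\f{\a}{n} = v*w$ with $v \in B$, and $v = \f{\a}{m}$ where $m = \av{v} \le n \le N$, so $\f{\a}{m} \in B$ with $m \le N$. Thus the same $N$ witnesses that $B$ is a uniform bar.

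None of these steps presents a genuine obstacle; the only point requiring a little care is the detachability argument in (c), where one must reduce the unbounded-looking quantifier ``$\exists u, w$'' implicit in $u \in \overline{B}$ to a bounded search over the prefixes of the given word, so that detachability of $B$ transfers constructively.
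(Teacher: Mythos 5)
Your proof is correct. The paper states Lemma \ref{vier} without proof, so there is no paper argument to compare against; your direct verification (in particular, the key observation in (c) that $u \in \overline{B}$ iff $\f{u}{k} \in B$ for some $k \le \av{u}$, which reduces the apparently unbounded existential to a finite, decidable search over prefixes, and the analogous prefix argument in (e) showing the same bound $N$ works) is exactly the natural and constructively sound way to establish the lemma.
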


\begin{proposition}\label{fangen} The following are equivalent:
\begin{enumerate}[(a)]
\item $\wkl!$
\item Each tree with at most one path has a longest path.
\item Each detachable bar which is closed under extension is a uniform bar.
\item $\fan$
\end{enumerate}
\end{proposition}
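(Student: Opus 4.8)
The plan is to prove the cycle of implications $(a)\Rightarrow(b)\Rightarrow(c)\Rightarrow(d)\Rightarrow(a)$, where $\wkl!$ denotes the principle that every infinite tree with at most one path has a path. The two ``easy'' links are $(a)\Rightarrow(b)$ and $(c)\Rightarrow(d)$. For $(a)\Rightarrow(b)$: given a tree $T$ with at most one path, form $T'$ via Proposition~\ref{uxuxux}. Then $T'$ is infinite, and by part~\eqref{lppd} of that proposition $T'$ still has at most one path, so by $\wkl!$ it has a path $\a$; by part~(e) of Proposition~\ref{uxuxux}, $\a$ is a longest path of $T$. For $(c)\Rightarrow(d)$: given a detachable bar $B$, pass to $\overline{B}$ using Lemma~\ref{vier} (it is detachable, a bar, and closed under extension), apply $(c)$ to get that $\overline{B}$ is a uniform bar, and then Lemma~\ref{vier}(e) gives that $B$ is a uniform bar.

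For $(d)\Rightarrow(a)$, let $T$ be an infinite tree with at most one path; I want a path of $T$. The idea is to encode ``$T$ has no path'' as a detachable bar and invoke $\fan$. Concretely, consider $B = \{u \mid u \notin T\}$, which is detachable since $T$ is. The tricky point is that $B$ need not be a bar outright --- it is a bar precisely when $T$ has no path. Instead I expect to argue by contradiction or to exploit the ``at most one path'' hypothesis more cleverly: since $T$ has at most one path, for any two distinct $\a,\b$ at least one leaves $T$; combined with $T$ being infinite one builds, level by level, a sequence $\g$ that is forced to stay in $T$ --- but making this constructive is exactly what requires $\fan$. The cleaner route: show that the complement of $T$, suitably closed up, is a uniform bar unless $T$ has a path of every length through a single branch. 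I would define, for each $n$, the (unique candidate) node $u_n$ of length $n$ that could lie on the path, using that $T$ is a detachable tree with at most one path to decide at each step which of the two children (if either) can be extended arbitrarily far --- but deciding that is itself $\wkl!$, so instead I package it so that failure of unboundedness along every branch yields, via $\fan$, a uniform bound $N$, and then search within $\zo^{N}$.

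For $(b)\Rightarrow(c)$: let $B$ be a detachable bar closed under extension; I must produce $n$ with $\zo^n \subseteq B$ (by Lemma~\ref{cava}, equivalent to being a uniform bar). Consider $T = \{u \mid \f{u}{k}\notin B \text{ for all } k \le \av{u}\} = \{u \mid u \notin \overline{B}\}$ restricted appropriately; since $B$ is closed under extension, $T$ is a tree (detachable, closed under restriction). Moreover $T$ has \emph{at most one path} --- in fact no path at all, since $B$ is a bar --- but the point is to get \emph{at most one path} cheaply: if $\a,\b$ are paths of $T$ and differ at some $n$, both $\f{\a}{n+1}$ and $\f{\b}{n+1}$ avoid $B$, and since $B$ is a bar we reach a contradiction only non-constructively; instead I will observe $T$ has at most one path for a structural reason (e.g.\ $T$ is co-convex or one arranges $B$ to bar one side), apply $(b)$ to get a longest path $\a$ of $T$, and then use that $B$ bars $\a$ to extract a finite level $n$ at which $T$ cannot reach, hence $\zo^n \subseteq \overline{B} = B$.

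The main obstacle is the passage $(d)\Rightarrow(a)$, and more precisely arranging the ``at most one path'' condition to survive the translations between trees and bars: $\fan$ as stated speaks of \emph{detachable} bars with no restriction, while $\wkl!$ needs the \emph{at most one path} hypothesis, so the reduction must ensure that the tree built from the bar (or vice versa) inherits uniqueness of paths. I expect the resolution to use the construction $T \mapsto T'$ of Proposition~\ref{uxuxux}, which is designed precisely to preserve the ``at most one path'' property (part~\eqref{lppd}), together with Lemma~\ref{cava} and Lemma~\ref{vier} to move between uniform bars and the combinatorial condition $\zo^n \subseteq B$; the bookkeeping of which finite level witnesses the bound is routine once the structural dictionary is set up.
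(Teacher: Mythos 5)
Your cycle $(a)\Rightarrow(b)\Rightarrow(c)\Rightarrow(d)\Rightarrow(a)$ is the same decomposition the paper uses, and your $(a)\Rightarrow(b)$ and $(c)\Rightarrow(d)$ links are exactly right. But there are two real problems.

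First, in $(b)\Rightarrow(c)$ you worry that establishing ``$T$ has at most one path'' from the bar property of $B$ requires a nonconstructive contradiction, and you propose to route around this via co-convexity. That worry comes from misreading the definition: ``at most one path'' quantifies over \emph{all pairs} $\a,\b$ of infinite sequences (not over pairs of actual paths), and asks that whenever $\a_n\ne\b_n$ for some $n$, some initial segment of $\a$ or of $\b$ leaves $T$. Since $B$ is a bar, for any $\a$ there is $n$ with $\f{\a}{n}\in B$, hence $\f{\a}{n}\notin T$; this already discharges the condition constructively and unconditionally. No structural detour is needed, and the rest of your $(b)\Rightarrow(c)$ argument (longest path $\a$, find $n$ with $\f{\a}{n}\in B$, conclude $\zo^n\subseteq B$ via Lemma~\ref{cava}) is correct.

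Second, and more seriously, $(d)\Rightarrow(a)$ is not actually proved: you correctly identify that $\{u\mid u\notin T\}$ need not be a bar and that this is the crux, but then you only gesture at ``packaging'' the problem. The missing idea is a concrete bar that records, level by level, which child of the root can still support a long branch. Given an infinite tree $T$ with at most one path, define
\[
u\in B \;\deef\; (0)*u\notin T \;\lor\; \forall v\in\zo^{\av{u}}\bigl((1)*v\notin T\bigr).
\]
To see that $B$ is a bar, fix $\a$ and consider the auxiliary set $B_\a=\{v\mid (0)*\f{\a}{\av{v}}\notin T\lor(1)*v\notin T\}$; this is detachable, closed under extension, and a bar precisely because $T$ has at most one path, so $\fan$ (applied to $B_\a$) gives $m$ with $\zo^m\subseteq B_\a$, whence $\f{\a}{m}\in B$. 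Now $B$ itself is detachable, closed under extension, and a bar, so a second application of $\fan$ yields $n$ with $\zo^n\subseteq B$, from which one decides that either all $(0)*u$ with $u\in\zo^n$ leave $T$ or all $(1)*u$ with $u\in\zo^n$ leave $T$; since $T$ is infinite, the surviving side has arbitrarily long members, giving the first bit of a path, and iterating (using $Z_T$ from Proposition~\ref{llpotowkl}) produces the path. Without some such explicit bar construction your $(d)\Rightarrow(a)$ does not close.
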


\begin{proof}  ``$(a) \Rightarrow (b)$'' This follows from Proposition \ref{uxuxux}.

\bigskip

``$(b) \Rightarrow (c)$'' Let $B$ be a detachable bar which is closed under extension.
Note that the set
\[
T = \brac{u \mid u \notin B}
\]
is a tree.
Since $B$ is a bar, the tree $T$ has at most one path.
Let $\a$ be a longest path of $T$.
There exists $n$ such that $\f{\a}{n} \in B$.
This implies that $T \cap \zo^n = \emptyset$ and therefore $\zo^n \subseteq B$.
Thus, by Lemma \ref{cava},
$B$ is a uniform bar.

\bigskip

``$(c) \Rightarrow (d)$'' This follows from Lemma \ref{vier}.

\bigskip

``$(d) \Rightarrow (a)$'' Fix 
an infinite tree $T$ with at most one path.
Define a set $B$ by
\[
u \in B\,  \deef  (0) * u \notin T   \lor  \forall
v  \in \zo^{\av{u}} \para{(1) * v \notin T}.
\]
We show that the $B$ is a bar.
Fix a sequence  $\a$.
We define
$B_\a \subseteq \Xs$ by
\[
v \in B_\a  \deef    \para{(0) * \f{\a}{\av{v}}
\notin T   \lor   (1) * v  \notin T}.
\]
Note that $B_\a$ is closed under extension.
Since $T$ has a most one path, $B_\a$
is a bar and therefore a uniform bar.
Thus, there exists $m$ such that
$\zo^m \subseteq B_\a$.
Thus, $\f{\a}{m} \in B$.
This concludes the proof that $B$ is a bar.

\bigskip

Since $B$ is is a uniform bar and closed under extension,
there exists an $n$ such that $\zo^n \subseteq B$. We show that either
\begin{enumerate}[(i)]
\item $\forall u \in  \zo^n \para{ (0) * u
   \notin T}$ or else
\item $\forall u \in  \zo^n \para{ (1) * u
  \notin T}$.
\end{enumerate}
If (i) fails to hold, there exists
$u \in \zo^n$ such that $(0)*u \in T$.
Since $u \in B$, this implies (ii).

\bigskip

Let $Z_T$ be defined as in the proof of Proposition \ref{llpotowkl}.
If (i) holds, then $(1) \in Z_T$. If (ii) holds, then $(0) \in Z_T$.
Iterating this construction yields a path $\gamma$ of $Z_T$ and therefore of $T$.

\end{proof}

The equivalence of $\fan$ and $\wkl!$ was proved in \cite{berger_ishihara_pale}.

\begin{corollary}\label{zitty}$\wkl \Rightarrow \fan$
\end{corollary}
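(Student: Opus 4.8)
The plan is to read this off immediately from Proposition \ref{fangen}. That proposition identifies $\fan$ with $\wkl!$, the statement that every infinite tree with at most one path has a path. Now $\wkl!$ is nothing but the restriction of $\wkl$ to those infinite trees that happen to have at most one path, so $\wkl \Rightarrow \wkl!$ requires no work: given an infinite tree $T$ with at most one path, $\wkl$ already furnishes a path of $T$. Hence $\wkl \Rightarrow \wkl! \Leftrightarrow \fan$, and the proof is essentially one line once Proposition \ref{fangen} is in hand.

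I expect there to be no genuine obstacle here; all the combinatorial content has already been discharged inside Proposition \ref{fangen}, specifically in its implication $(d)\Rightarrow(a)$, where the bar $B$, the set $Z_T$, and the iteration producing the path are constructed. So I would keep the proof short and simply cite Proposition \ref{fangen}, noting the trivial implication $\wkl \Rightarrow \wkl!$.

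If one instead preferred an argument phrased purely in terms of bars, one could verify directly that $\wkl$ implies clause $(c)$ of Proposition \ref{fangen}: given a detachable bar $B$ closed under extension, the complement $T = \brac{u \mid u \notin B}$ is a tree with at most one path (indeed no path, since $B$ is a bar); applying Proposition \ref{uxuxux} to pass to the infinite tree $T'$ and then using $\wkl$ to extract a path of $T'$, which is a longest path of $T$ by Proposition \ref{uxuxux}(e), one forces $T \cap \zo^n = \emptyset$ for some $n$, i.e. $\zo^n \subseteq B$, so $B$ is a uniform bar by Lemma \ref{cava}. But this merely retraces the proof of Proposition \ref{fangen}, so invoking that proposition directly is the cleaner route, and the one I would take.
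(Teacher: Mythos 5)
Your proposal is correct and matches the paper's intent exactly: the corollary is stated immediately after Proposition \ref{fangen} with no separate proof, precisely because $\wkl \Rightarrow \wkl!$ is the trivial restriction to infinite trees with at most one path, and Proposition \ref{fangen} gives $\wkl! \Leftrightarrow \fan$. Your alternate route via clause $(c)$ is also sound but, as you note yourself, merely retraces the internal proof of that proposition.
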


Corollary \ref{zitty} was originally published in \cite{ishihara_weak}.

\begin{lemma}\label{lemacc} If $A \subseteq \Xs$ is co-convex, then
$\overline{A}$ is also co-convex.
\end{lemma}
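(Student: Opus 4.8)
We must show: if $A \subseteq \Xs$ is co-convex, then $\overline{A}$ is co-convex, i.e., the complement $\brac{u \mid u \notin \overline{A}}$ is convex. Unwinding the definitions, $\overline{A} = \brac{u*w \mid u \in A \land w \in \Xs}$, so $v \notin \overline{A}$ precisely when no initial segment of $v$ lies in $A$, that is, $\forall k \le \av{v}\para{\f{v}{k} \notin A}$. Meanwhile, $A$ co-convex means: for all $u,w$ with $u \notin A$ and $w \notin A$ and all $v$ with $u < v < w$, we have $v \notin A$.
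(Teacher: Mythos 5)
Your proposal stops exactly where the actual proof should begin. Everything you write is correct preparatory unwinding: $v \notin \overline{A}$ iff $\forall k \le \av{v}\para{\f{v}{k} \notin A}$, and co-convexity of $A$ is convexity of its complement. But no argument follows; you never take $u < v < w$ with $u,w \notin \overline{A}$ and derive $v \notin \overline{A}$.

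To close the gap, fix such $u,v,w$ and fix $m \le \av{v}$. From $u,w \notin \overline{A}$ you get $\f{u}{m} \notin A$ and $\f{w}{m} \notin A$. Now compare the restrictions: writing $k$ for the splitting index witnessing $u < v$, either $m \le k$, in which case $\f{u}{m}=\f{v}{m}$ and so $\f{v}{m}\notin A$ immediately, or $m > k$, in which case $\f{u}{m} < \f{v}{m}$; arguing symmetrically on the $v < w$ side, in the remaining case $\f{u}{m} < \f{v}{m} < \f{w}{m}$ and co-convexity of $A$ yields $\f{v}{m} \notin A$. Since $m \le \av{v}$ was arbitrary, $v \notin \overline{A}$. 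This is the argument the paper gives (stated a bit more tersely there, folding the case split into one application of co-convexity), so once you supply this step your approach coincides with the paper's.
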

\begin{proof} Fix $u, v, w$ such that $u < v < w$ and $u,w \notin \overline{A}$.
We have to show that $v \notin \overline{A}$.  
To this end, fix $m \le \av{v}$. Then $\f{u}{m}  \notin A$ and  $\f{w}{m}  \notin A$.
The co-convexity of $A$ yields $\f{v}{m} \notin A$.
Since $m \le \av{v}$ was chosen arbitrarily,
this implies that $v \notin \overline{A}$.

\end{proof}

\begin{proposition}\label{strang}$\,$ 
\begin{enumerate}[(a)]
\item Every infinite convex tree with at most one path has a path.
\item Every convex tree with at most one path has a longest path. 
\item\label{strangc} Every detachable co-convex bar is a uniform bar.
\end{enumerate}
\end{proposition}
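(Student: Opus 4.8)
The plan is to prove part~(a) and then obtain (b) and (c) from it.

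Given (a), part~(b) follows at once from Proposition~\ref{uxuxux}: if $T$ is a convex tree with at most one path, then the infinite tree $T'$ supplied by that proposition is again convex and has at most one path (the latter by part~\ref{lppd}), so by (a) it has a path $\a$, and $\a$ is a longest path of $T$. For part~(c), it suffices by Lemma~\ref{vier} to treat the case where $B$ is closed under extension — one then applies this case to $\overline{B}$, which is closed under extension, detachable and a bar by Lemma~\ref{vier} and co-convex by Lemma~\ref{lemacc}, and concludes with Lemma~\ref{vier} once more. So assume $B$ is closed under extension. Then $T:=\{u\mid u\notin B\}$ is a convex tree, and since $B$ is a bar, $T$ has no path, hence at most one path. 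By~(b), $T$ has a longest path $\a$; applying the bar property of $B$ to $\a$ yields $n$ with $\f{\a}{n}\in B$, i.e.\ $\f{\a}{n}\notin T$. As $\a$ is a longest path of $T$, no element of $T$ can have length $n$, so $\zo^n\subseteq B$, and $B$ is a uniform bar by Lemma~\ref{cava}.

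For part~(a), let $T$ be an infinite convex tree with at most one path. Since $T$ is detachable and each $T\cap\zo^n$ is nonempty, we may compute its least and greatest elements $l_n$ and $r_n$; convexity gives $T\cap\zo^n=\{u\mid l_n\le u\le r_n\}$. I would attach to level $n$ the dyadic interval $I_n:=[\,l_n2^{-n},\,(r_n+1)2^{-n}\,]\subseteq[0,1]$. These are nested, their left endpoints $l_n2^{-n}$ increase with $n$ and their right endpoints $(r_n+1)2^{-n}$ decrease, so both sequences converge, say to reals $a\le b$. This is where uniqueness enters: $[a,b]$ lies inside every $I_n$, so if $a<b$ then two distinct points of $[a,b]$ yield, via suitable binary expansions, two distinct sequences all of whose restrictions lie in $[l_n,r_n]=T\cap\zo^n$, contradicting that $T$ has at most one path; since $\neg(a<b)$ entails $b\le a$ for real numbers, we conclude $a=b=:x$. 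It then remains to read off from $x$, together with the computable data $(l_n)$ and $(r_n)$, an actual path $\g$ of $T$, built level by level so that $\f{\g}{n}\in T\cap\zo^n$: given $\f{\g}{n}=u$, extend by the child of $u$ that lies in $[l_{n+1},r_{n+1}]$ and whose dyadic half-interval contains $x$, taking the appropriate binary expansion of $x$ when $x$ coincides with an endpoint of some $I_n$.

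The reductions above, and the nestedness and monotonicity of the $I_n$, are routine. The main obstacle is the last step, recovering $\g$ from $x$: one must verify that at every level the required child genuinely exists — that $u$ is never a dead end of $T$ — and can be identified without appeal to Markov's principle. This is exactly the point at which convexity is indispensable, together with the fact that $x$ is a bona fide real approached from below by the $l_n2^{-n}$ and from above by the $(r_n+1)2^{-n}$; the delicate case is when $x$ is dyadic and equals some $l_n2^{-n}$ or $(r_n+1)2^{-n}$, which must be settled by choosing the binary expansion of $x$ that keeps $\f{\g}{n}$ inside $[l_n,r_n]$.
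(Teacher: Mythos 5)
Your treatment of parts (b) and (c) is correct and follows the paper's own route: (b) is obtained from (a) via Proposition~\ref{uxuxux}(c),(e),(\ref{lppd}), and (c) reduces to the case where $B$ is closed under extension via Lemmas~\ref{vier} and~\ref{lemacc}, passes to the convex tree $T=\brac{u \mid u \notin B}$, and invokes (b) together with Lemma~\ref{cava}.

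The problem is part (a). Your real-number approach correctly establishes $a=b=:x$ (the contradiction from $a<b$ can indeed be made constructive by locating two dyadic rationals strictly between $a$ and $b$), but the final step --- ``read off from $x$ \ldots an actual path $\g$'' --- is precisely where the proof must do its work, and you leave it unresolved. Deciding which dyadic half-interval contains $x$ amounts to a comparison of the real $x$ with a dyadic rational, which is not constructively decidable; and ``taking the appropriate binary expansion of $x$ when $x$ coincides with an endpoint'' presupposes that you can recognise such a coincidence, which again you cannot. Knowing $(l_n)$ and $(r_n)$ does not rescue this: at a given level both children of $\f{\g}{n}$ may lie in $[l_{n+1},r_{n+1}]$, and choosing between them is exactly the undecidable comparison in disguise; whereas choosing a dead end would leave the construction stuck. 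You flag this as ``the main obstacle'' but do not overcome it, so the proof of (a) is genuinely incomplete.

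The paper avoids the detour through $\mathbb{R}$ entirely and makes the decision procedure explicit. To determine $\g_0$, it applies the at-most-one-path hypothesis to the two specific sequences $\a=(0,1,1,1,\ldots)$ and $\b=(1,0,0,0,\ldots)$, obtaining an $n$ with $\f{\a}{(n+1)}\notin T$ or $\f{\b}{(n+1)}\notin T$. These are the largest level-$(n{+}1)$ node beginning with $0$ and the smallest beginning with $1$; since $T$ is convex, detachable and infinite, $T\cap\zo^{n+1}$ is a nonempty interval that omits one of them, hence lies entirely in one half, and this \emph{decidably} fixes $\g_0$. One then iterates on $T_{(\g_0)}$, which is again an infinite convex tree with at most one path. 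If you want to salvage your version, you would in effect have to reproduce this dichotomy argument at each level to replace the undecidable comparisons with $x$; at that point the passage through real numbers carries no benefit.
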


\begin{proof} $(a)$ Let $T$ be an infinite convex tree with at most one path.
Set
\[
\a = \para{0,1,1,1,\ldots}
\]
and
\[
\b = \para{1,0,0,0,\ldots}.
\]
Since $T$ has at most one path, there exists $n$ such
that 
\[
\f{\a}{(n+1)} \notin T \lor 
\f{\b}{(n+1)} \notin T. 
\]
Since $T$ is convex, we can conclude that either
\begin{enumerate}[(i)]
\item $\forall u \in  \zo^n \para{ (0) * u \notin T}$ or else
\item $\forall u \in  \zo^n \para{ (1) * u \notin T}$.
\end{enumerate}  
In the first case, choose $\a_0=1$ and in the second case, choose $\a_0=0$.
Iterating this construction leads to a path $\a$ of $T$.

\bigskip

(b) follows from (a) and Proposition \ref{uxuxux}.

\bigskip

(c) Let $B$ be a detachable, co-convex bar.
By Lemma \ref{lemacc}, we can assume that $B$ is closed under extension.
The set
\[
T = \brac{u \mid u \notin B}
\]
is a convex tree.
Since $B$ is a bar, the tree $T$ has at most one path.
Let $\a$ be a longest path of $T$.
There exists $n$ such that $\f{\a}{n} \in B$.
Then $T \cap \zo^n = \emptyset$.
Thus, $\zo^n \subseteq B$.
This implies that
$B$ is a uniform bar.

\end{proof}

The notion of co-convex bars was introduced in \cite{three}.
In \cite{schwicht},
the fan theorem for co-convex bars---every detachable co-convex bar is a uniform bar---is used for 
a case study in program extraction from proofs.
For a treatment of convex trees in constructive reverse mathematics see \cite{miss}.

\section{The uniform continuity theorem}\label{poo}

Fix a function $F: \X \to \N$ and $u \in \Xs$. We define
\[
F(u) = F(u*{\textbf 0}),
\]
where $\textbf{0}=(0,0,0,\ldots)$. Moreover, we define a new function $F_u: \X \to \N$ by
\[
F_u(\a) = F(u*\a).
\]
For $n \in \N$, we define
\[
F \equiv n \Leftrightarrow  \forall \a\para{F(\a)=n}.
\]

A function  $F: \X \to \N$ is
\begin{itemize}
\item \textit{pointwise continuous}\index{pointwise continuous function} if
\[
\forall \a \, \exists n \, \forall \b 
\para{ \f{\a}{n} = \f{\b}{n} \,\,\, \Rightarrow \,\,\, F(\a) = F(\b)};
\]
\item \textit{uniformly continuous}\index{uniformly continuous function} if
\[
\exists n \, 
\forall \a,\b 
\para{ \f{\a}{n} = \f{\b}{n} \,\,\, \Rightarrow \,\,\, F(\a) = F(\b)};
\]
\item\textit{bounded} if
\[
\exists N \, 
\forall \a \para{ F(\a) \le N};
\]
\item \textit{constant} if
\[
\exists n \,  \para{ F \equiv n}.
\]
\end{itemize}

\begin{lemma} If $F: \X \to \N$ is uniformly continuous,
\[
F \equiv 0  \,\,\, \lor \,\,\, \lnot \para{F \equiv 0}.
\] 
\end{lemma}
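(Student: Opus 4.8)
The plan is to use uniform continuity to reduce the disjunction to a finite, decidable search. First I would invoke uniform continuity to fix a modulus $n$ such that $\f{\a}{n} = \f{\b}{n}$ implies $F(\a) = F(\b)$ for all $\a,\b$. Instantiating this with $\b = \f{\a}{n} * \textbf{0}$ and recalling the convention $F(u) = F(u*\textbf{0})$, we obtain $F(\a) = F(\f{\a}{n})$ for every $\a$. Thus $F$ is completely determined by its restriction to the finite set $\zo^n$.

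Next I would decide the statement $\forall u \in \zo^n \para{F(u) = 0}$. Since $\zo^n$ is a finite set and equality on $\N$ is decidable, this is a finite conjunction of decidable propositions, hence itself decidable. So either $F(u) = 0$ for all $u \in \zo^n$, or there is some $u \in \zo^n$ with $F(u) \neq 0$.

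In the first case, for an arbitrary $\a$ we have $F(\a) = F(\f{\a}{n}) = 0$, and hence $F \equiv 0$. In the second case, $F(u) = F(u * \textbf{0}) \neq 0$, which rules out $F \equiv 0$, so $\lnot \para{F \equiv 0}$. In either case we conclude $F \equiv 0 \lor \lnot \para{F \equiv 0}$.

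I do not expect a genuine obstacle here; the only points needing care are the passage from the modulus of uniform continuity to the identity $F(\a) = F(\f{\a}{n})$ via the convention $F(u) = F(u*\textbf{0})$, and the observation that decidability of equality on $\N$ legitimises the finite search over $\zo^n$ constructively.
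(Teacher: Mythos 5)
The paper states this lemma without proof, so there is no official argument to compare against. Your proposal is correct and is surely the intended one: uniform continuity gives a modulus $n$ with $F(\a) = F(\f{\a}{n})$ for all $\a$, reducing $F \equiv 0$ to the finite decidable conjunction $\forall u \in \zo^n\,\para{F(u)=0}$, and then the two branches of that decision yield $F \equiv 0$ and $\lnot\para{F\equiv 0}$ respectively. All the steps are constructively unproblematic; in particular you are right to flag the role of the convention $F(u)=F(u*\textbf{0})$, which is exactly what lets the modulus collapse the quantifier over $\X$ to a search over $\zo^n$.
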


\begin{lemma}\label{charactpcuc} Fix $F:\X \to \N$.
\begin{enumerate}[(a)]
\item $F$ is pointwise continuous  $\,\,\, \Leftrightarrow \,\,\,$ $\forall \a
  \, \exists n \para{F_{\f{\a}{n}} \text{ is constant}}$
\item $F$ is uniformly continuous $\,\,\, \Leftrightarrow \,\,\,$
$ \exists N \, \forall u \in \zo^N \para{F_u \text{ is constant}}$
\end{enumerate}
\end{lemma}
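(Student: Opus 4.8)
The plan is to prove each direction by a direct unfolding of the definitions, using the characterizations of pointwise/uniform continuity in terms of the auxiliary functions $F_u$.

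For part (a), suppose first that $F$ is pointwise continuous. Given $\a$, continuity yields $n$ such that $\f{\a}{n} = \f{\b}{n}$ implies $F(\a) = F(\b)$ for all $\b$. I claim $F_{\f{\a}{n}}$ is constant, equal to $F(\a)$: for any $\g$, the sequence $\f{\a}{n} * \g$ agrees with $\a$ on its first $n$ entries, so $F_{\f{\a}{n}}(\g) = F(\f{\a}{n} * \g) = F(\a)$. Conversely, suppose $\forall \a \, \exists n \para{F_{\f{\a}{n}} \text{ is constant}}$. Fix $\a$, pick such an $n$ with $F_{\f{\a}{n}} \equiv m$ for some $m$. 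Since $\a = \f{\a}{n} * (\a_n, \a_{n+1}, \ldots)$, applying the constancy at the shifted tail gives $F(\a) = m$; likewise for any $\b$ with $\f{\b}{n} = \f{\a}{n}$ we get $F(\b) = m$. Hence $F(\a) = F(\b)$, so $F$ is pointwise continuous.

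Part (b) is the same argument carried out uniformly. If $F$ is uniformly continuous, take $N$ witnessing this; then for every $u \in \zo^N$ and every $\g$, the sequence $u * \g$ agrees on its first $N$ entries with $u * \mathbf{0}$, so $F_u(\g) = F(u*\g) = F(u * \mathbf{0}) = F(u)$, i.e.\ $F_u \equiv F(u)$. Conversely, given $N$ with $F_u$ constant for all $u \in \zo^N$, any $\a, \b$ with $\f{\a}{N} = \f{\b}{N}$ satisfy $F(\a) = F_{\f{\a}{N}}(\text{tail of }\a)$ and $F(\b) = F_{\f{\b}{N}}(\text{tail of }\b)$, and since $\f{\a}{N} = \f{\b}{N}$ and $F_{\f{\a}{N}}$ is constant, these two values coincide. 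Thus $F$ is uniformly continuous with modulus $N$.

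I expect no genuine obstacle here; the only point requiring a little care is the bookkeeping that an element $\a \in \X$ decomposes as $\f{\a}{n} * (\a_n, \a_{n+1}, \ldots)$, so that a statement about $F_u$ on all of $\X$ can be specialized to recover $F(\a)$ itself — this is what lets the "constant $F_u$" conditions talk about $F$ and not merely about extensions. Everything else is a mechanical substitution into the definitions of pointwise and uniform continuity.
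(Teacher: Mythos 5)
Your proof is correct. The paper states Lemma~\ref{charactpcuc} without proof, treating it as a routine unfolding of the definitions, and your argument is exactly that unfolding: in each direction you translate between the modulus quantifiers in the definitions of pointwise/uniform continuity and the constancy of the restricted functions $F_u$, using the decomposition $\a = \f{\a}{n} * (\a_n, \a_{n+1}, \ldots)$ and the fact that $u*\g$ and $u*\mathbf{0}$ agree on the first $\av{u}$ entries. Nothing is missing and nothing deviates from what the paper implicitly takes to be the obvious argument.
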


\begin{lemma}\label{bbaarr} Suppose that $F:\X \to \N$ is pointwise continuous. Then the set 
\[
\brac{u \mid F(u) \le\av{u}}
\]
is a bar.
\end{lemma}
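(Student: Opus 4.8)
The plan is to fix an arbitrary $\a \in \X$ and produce an $n$ with $\f{\a}{n}$ in the set $\brac{u \mid F(u) \le \av{u}}$, i.e. with $F(\f{\a}{n}) \le n$. First I would invoke the characterisation of pointwise continuity in Lemma \ref{charactpcuc}(a): since $F$ is pointwise continuous, there is an $m$ such that $F_{\f{\a}{m}}$ is constant, say $F_{\f{\a}{m}} \equiv k$ for some $k \in \N$. Unfolding the definition $F_{\f{\a}{m}}(\b) = F(\f{\a}{m} * \b)$ together with the convention $F(u) = F(u * \textbf 0)$, this means $F(\f{\a}{m} * v) = k$ for every finite $v$, and in particular $F(\f{\a}{n}) = k$ for every $n \ge m$.

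The key remaining step is simply to choose $n$ large enough. Set $n = \max(m, k)$. Then $n \ge m$, so $F(\f{\a}{n}) = k$, and $n \ge k$, so $F(\f{\a}{n}) = k \le n$. Hence $\f{\a}{n} \in \brac{u \mid F(u) \le \av{u}}$, which is exactly what is required for this set to be a bar. Since $\a$ was arbitrary, the set is a bar.

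I do not expect any real obstacle here; the only point that needs a little care is keeping the two conventions straight — that $F(u)$ abbreviates $F(u * \textbf 0)$ and that $F_u$ is the shifted function — so that "$F_{\f{\a}{m}}$ is constant" genuinely delivers a fixed value of $F$ along the whole tail of $\a$ rather than merely pointwise agreement. Once that is unwound, the lemma is immediate from the trick of padding the index up past the constant value $k$.
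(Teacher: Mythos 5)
Your proof is correct and is essentially the same as the paper's: both fix $\a$, obtain a modulus $m$ (you via Lemma~\ref{charactpcuc}(a), the paper directly from the definition of pointwise continuity), and then pad the index up to $\max(m,k)$ so that it dominates the value $k=F(\a)$. The paper compresses your explicit $\max$ into the phrase ``we may assume $F(\a)\le N$,'' but the underlying trick is identical.
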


\begin{proof} Fix $\a$. There exists $N$ such that
\[
\forall \b 
\para{ \f{\a}{N} = \f{\b}{N} \,\,\, \Rightarrow \,\,\, F(\a) = F(\b)}.
\]
We may assume that $F(\a) \le N$. Note that
\[
F(\f{\a}{N}) = F(\a) \le N.
\]
This implies that
$$\f{\a}{N} \in \brac{u \mid F(u) \le\av{u}}.$$

\end{proof}

\begin{lemma}\label{bounded} Every uniformly continuous function $F:\X \to \N$ is bounded.
\end{lemma}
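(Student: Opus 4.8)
The plan is to reduce boundedness to the finitary characterisation of uniform continuity supplied by Lemma~\ref{charactpcuc}. Since $F$ is uniformly continuous, part (b) of that lemma gives an $N$ such that $F_u$ is constant for every $u \in \zo^N$. The set $\zo^N$ has exactly $2^N$ elements, so this is a statement about finitely many $u$; for each of them I choose some $n_u \in \N$ with $F_u \equiv n_u$, using only finite choice, and then set $M = \max\brac{n_u \mid u \in \zo^N}$, which exists because it is a maximum over a finite set.

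It then remains to check that $M$ bounds $F$. Fix $\a \in \X$ and put $u = \f{\a}{N}$, so that $\a = u * \para{\a_N,\a_{N+1},\ldots}$. Then
\[
F(\a) = F_u\para{\a_N,\a_{N+1},\ldots} = n_u \le M,
\]
because $F_u \equiv n_u$. As $\a$ was arbitrary, $\exists N \, \forall \a \para{F(\a) \le N}$ holds with witness $M$; that is, $F$ is bounded.

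I expect no genuine obstacle here. The only point worth a word of justification is that both the selection of the $n_u$ and the formation of their maximum range over the finite set $\zo^N$, so neither step requires anything beyond the constructively unproblematic handling of finite sets. One could equally argue directly from the $\varepsilon$--$\delta$ form of uniform continuity, defining a function on $\zo^n$ by $u \mapsto F(u)$ and taking its maximum; routing through Lemma~\ref{charactpcuc} merely makes the ``finitely many cases'' structure explicit and keeps the argument short.
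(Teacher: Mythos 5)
The paper states Lemma~\ref{bounded} without proof, treating it as immediate; your argument supplies precisely the straightforward justification one would expect, and it is correct. Extracting an $N$ from uniform continuity (whether via Lemma~\ref{charactpcuc}(b) or directly from the $\varepsilon$--$\delta$ form), observing that $\zo^N$ is a finite set so that a maximum of the associated constant values exists constructively, and then checking $F(\a) = F_{\f{\a}{N}}(\ldots) \le M$ is exactly the intended reasoning; your own closing remark correctly notes that the detour through Lemma~\ref{charactpcuc} is optional.
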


We now consider the \textit{uniform continuity theorem}\index{uniform continuity theorem}:

\begin{description}
\item[$\uc$] Every pointwise continuous function $F:\X \to \N$ is uniformly continuous.
\end{description}

\begin{proposition}\label{ucfan} $\uc \,\,\,  \Rightarrow \,\,\, \fan$
\end{proposition}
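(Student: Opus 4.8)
The plan is to derive $\fan$ from $\uc$ by encoding a given detachable bar as a pointwise continuous function and then reading off the uniformity of the bar from the uniform continuity of that function. Let $B \subseteq \Xs$ be a detachable bar. By Lemma \ref{vier}, it suffices to treat the case where $B$ is closed under extension, so I would first replace $B$ by $\overline{B}$ and assume $B$ is detachable, closed under extension, and a bar. Then, by Lemma \ref{cava}, what has to be shown is that $\exists n \para{\zo^n \subseteq B}$.

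Next I would define $F : \X \to \N$ by setting $F(\a)$ to be the least $n$ with $\f{\a}{n} \in B$. Since $B$ is a detachable bar, this least witness exists for every $\a$, so $F$ is well defined. I claim $F$ is pointwise continuous: given $\a$, pick $n = F(\a)$, so $\f{\a}{n} \in B$; then for any $\b$ with $\f{\b}{n} = \f{\a}{n}$ we also have $\f{\b}{n} \in B$, and moreover no shorter initial segment of $\b$ lies in $B$ (since $\f{\b}{m} = \f{\a}{m}$ for $m \le n$ and $n$ was least for $\a$), hence $F(\b) = n = F(\a)$. Here detachability of $B$ is exactly what lets us compute the least witness, and closure under extension is not even needed for continuity but keeps the picture clean.

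Now apply $\uc$: there is an $N$ such that $\f{\a}{N} = \f{\b}{N}$ implies $F(\a) = F(\b)$ for all $\a,\b$. I would then argue that $F(\a) \le N$ for every $\a$. Suppose toward a contradiction — or rather, directly — fix $\a$; the initial segment $\f{\a}{N}$ determines $F(\a)$, and since $F(\a)$ is the least $m$ with $\f{\a}{m} \in B$ while $B$ is a bar, one sees $F(\a) \le N$: indeed if $F(\a) > N$ were possible for some $\a$, consider that the value $F(\a)$ depends only on $\f{\a}{N}$, yet $F(\a) > N$ means $\f{\a}{N} \notin B$; extend $\f{\a}{N}$ by a witness from the bar property of $B$ applied to $\f{\a}{N} * \mathbf{0}$ to get a bound, but the point is cleaner: since $\f{\a}{N}$ alone fixes $F(\a)$, and $\f{\a}{N}\in B$ would give $F(\a)\le N$, the only way to have $F(\a)>N$ is $\f{\a}{N}\notin B$; but then take $\b = \f{\a}{N} * \mathbf 0$, note $F(\b)$ is some least $k$, and since $\f{\a}{N}=\f{\b}{N}$ we get $F(\a)=F(\b)=k$; if $k > N$ this contradicts nothing yet, so I should instead directly conclude from $F(\a) \le \max\{F(\b) : \b \in \zo^N * \mathbf 0\}$ after noting $F$ is bounded by $N$ via uniform continuity only if I also invoke that $\f\a N\in B$ for the relevant segment. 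The main obstacle is precisely this last step: showing the uniform modulus $N$ forces $\zo^N \subseteq B$. The clean way is: for each $u \in \zo^N$, $F(u * \mathbf 0) = F_u(\mathbf 0)$ depends only on $u$, and by uniform continuity $F$ takes only finitely many values on $\zo^N$, say all $\le N$ after possibly enlarging $N$; then for each $u$, $\f{(u*\mathbf 0)}{F(u*\mathbf 0)} \in B$ with $F(u * \mathbf 0) \le N$, and since $B$ is closed under extension and $F(u*\mathbf0)\le N=\av u$, we get $u = \f{(u*\mathbf 0)}{N} \in B$. Hence $\zo^N \subseteq B$, and by Lemma \ref{cava} together with Lemma \ref{vier}(e), $B$ — and the original bar — is uniform.

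So the key steps in order are: (1) reduce to $B$ detachable, closed under extension, a bar, via Lemma \ref{vier}; (2) define $F(\a) = \mu n.\ \f\a n \in B$ and verify well-definedness using detachability; (3) prove $F$ is pointwise continuous using the minimality of the witness; (4) apply $\uc$ to get a uniform modulus $N$; (5) conclude $F \le N$ on all of $\X$, possibly after enlarging $N$ to absorb the finitely many values $F$ takes, and hence $\zo^N \subseteq B$ using closure under extension; (6) finish with Lemma \ref{cava}. The delicate point — step (5) — is making the jump from ``$F$ has a uniform continuity modulus'' to ``$F$ is bounded by that modulus'', which is where Lemma \ref{bounded} morally enters and where one must be careful to enlarge $N$ to the maximum of the finitely many values before claiming $\zo^N\subseteq B$.
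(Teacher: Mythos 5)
Your proposal is essentially correct and rests on the same central idea as the paper: define $F(\a)=\min\brac{n\mid \f{\a}{n}\in B}$, observe that $F$ is pointwise continuous, and invoke $\uc$. Where you diverge is in the last step, and here the paper is substantially cleaner. After obtaining a uniform continuity modulus $N$ you first try to argue that $F(\a)\le N$ for all $\a$; as you yourself notice mid-paragraph, this is simply false --- the modulus of uniform continuity need not bound $F$ --- and the remedy you eventually reach (pass to $M=\max\brac{F(u*\mathbf 0)\mid u\in\zo^N}$) is correct but gets tangled because you reuse the letter $N$ for two different roles. More importantly, your detour through $\overline{B}$, closure under extension, $\zo^N\subseteq B$, and Lemma \ref{cava} is unnecessary. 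The paper simply applies Lemma \ref{bounded} (every uniformly continuous $F:\X\to\N$ is bounded) and then observes that $F$ being bounded by $N$ is, by the definitions of $F$ and of ``uniform bar,'' literally the statement $\exists N\,\forall\a\,\exists n\le N\,\para{\f{\a}{n}\in B}$. No reduction to an extension-closed $B$ is required, and no appeal to $\zo^N\subseteq B$ is needed. You acknowledge that Lemma \ref{bounded} ``morally enters'' in step (5), but in fact it should simply be cited, rather than re-derived and then routed through Lemma \ref{cava}. So: right idea, correct in the end, but the final step can be collapsed to two lines by using the stated lemma and reading off the definition.
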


\begin{proof} Let $B$ be a detachable bar.
The function
\begin{equation}\label{functionn}
F(\a) = \min\brac{n \mid  \f{\a}{n} \in B}
\end{equation}
is pointwise continuous.
In view of $\uc$, the function $F$ is uniformly continuous and therefore, by Lemma
\ref{bounded}, bounded. Thus, $B$ is a uniform bar.
\end{proof}

Fix functions $F,M:\X \to \N$. 
The function $M$ is a  \textit{modulus}\index{modulus of continuity} for $F$ if
\begin{enumerate}[(a)]
\item $M$ is pointwise continuous;
\item 
$\forall \a, \b,  \para{ \f{\a}{M(\a)} = \f{\b}{M(\a)} \,\,\, \Rightarrow \,\,\, F(\a) = F(\b)}$.
\end{enumerate}

\begin{description}
\item[$\muc$] Fix $F,M:\X \to \N$. Assume that $M$ is a pointwise continuous modulus of
$F$.
  Then $F$ is uniformly continuous.
\end{description}

\begin{proposition}\label{stier} $\muc \,\,\, \Leftrightarrow \,\,\, \fan$
\end{proposition}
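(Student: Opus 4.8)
The plan is to prove both directions separately. For the direction $\fan \Rightarrow \muc$, I would start from a pointwise continuous $F$ together with a pointwise continuous modulus $M$. The key observation is that $M$ being pointwise continuous makes the set
\[
B = \brac{u \mid M(u) \le \av{u}}
\]
a bar, by Lemma \ref{bbaarr} applied to $M$ (note $M$ plays the role of $F$ there). Moreover $B$ is detachable, since $M(u) \le \av{u}$ is a decidable condition on the natural number $M(u) = M(u*\mathbf 0)$. Applying $\fan$, $B$ is a uniform bar, so there is $N$ with $\forall \a\,\exists n \le N\para{\f{\a}{n} \in B}$. I then want to conclude that $N$ itself is a modulus of uniform continuity for $F$: given $\a,\b$ with $\f{\a}{N} = \f{\b}{N}$, pick $n \le N$ with $\f{\a}{n} \in B$, i.e. $M(\f{\a}{n}\cdot\mathbf 0)\le n \le N$. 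Using the definition of $M$ as a modulus and the fact that $\f{\a}{N}=\f{\b}{N}$ forces agreement far enough, one derives $F(\a) = F(\b)$. The slightly delicate point is passing between $M(\a)$ and $M(\f{\a}{n})$; this is where I expect to need a small argument showing that once $\f{\a}{n}\in B$ with $n\le N$, the modulus condition at $\a$ (or at a suitable finite extension) is controlled by $N$ uniformly — essentially re-deriving the equivalence in Lemma \ref{charactpcuc}(b) in this setting.

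For the converse $\muc \Rightarrow \fan$, the natural route is to show $\muc \Rightarrow \uc$ and then invoke Proposition \ref{ucfan}. So let $F$ be pointwise continuous; I must manufacture a pointwise continuous modulus $M$ for $F$ in order to apply $\muc$. The canonical choice is
\[
M(\a) = \min\brac{n \mid \forall \b\para{\f{\a}{n} = \f{\b}{n} \Rightarrow F(\a) = F(\b)}},
\]
which is well-defined by pointwise continuity of $F$ and clearly satisfies the modulus inequality. The real content is verifying that this $M$ is itself pointwise continuous; this is the main obstacle of the whole proposition. The standard trick (going back to the treatments in \cite{the_fan_pale,dbfr}) is to use Lemma \ref{charactpcuc}(a): fix $\a$, get $n$ with $F_{\f{\a}{n}}$ constant, and then check that $M$ depends only on $\f{\a}{n}$ on that neighbourhood — one shows $M(\b) = M(\a)$ whenever $\f{\a}{n'}=\f{\b}{n'}$ for a suitable $n' \ge n$, because on a neighbourhood where $F$ is already locally constant the "first $n$ that works" stabilises. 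Some care is needed because $M(\a)$ is defined by a minimum, and minima of varying sets need not be continuous constructively; the way around this is precisely that on a neighbourhood where $F_{\f{\a}{n}}$ is constant, the set whose minimum defines $M$ is the same set for all nearby $\b$.

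Once $M$ is shown to be a pointwise continuous modulus of $F$, $\muc$ gives that $F$ is uniformly continuous, establishing $\uc$, and then $\fan$ follows from Proposition \ref{ucfan}. Assembling the two implications yields $\muc \Leftrightarrow \fan$. I expect the write-up to be short in the $\fan \Rightarrow \muc$ direction and to require the bulk of the argument — the continuity-of-the-minimal-modulus lemma — in the other direction; it may be cleanest to isolate "every pointwise continuous $F$ admits a pointwise continuous modulus" as a separate unnumbered claim inside the proof.
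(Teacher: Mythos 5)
Your treatment of the direction $\fan \Rightarrow \muc$ is essentially the paper's argument: apply Lemma~\ref{bbaarr} to the modulus $M$ to get the detachable bar $B = \brac{u \mid M(u) \le \av{u}}$, apply $\fan$, and read off uniform continuity of $F$ via Lemma~\ref{charactpcuc}(b). The ``slightly delicate point'' you flag is real but routine: from $M(\f{\a}{n}) \le n$ one considers $\g = \f{\a}{n}*\mathbf{0}$, uses $M(\g) \le n$ together with the modulus condition at $\g$ to see that $F_{\f{\a}{n}}$ is constant, and then $F_{\f{\a}{N}}$ is constant as well.

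The converse, however, has a genuine gap, and it is a conceptual one. You propose to show $\muc \Rightarrow \uc$ by manufacturing a pointwise continuous modulus for an \emph{arbitrary} pointwise continuous $F$. This route is blocked in principle: $\muc$ is equivalent to $\fan$ (the very proposition at hand), while $\uc$ is equivalent to $\fan \land \mr{DECO}$ (Proposition~\ref{pic2}), so $\muc \Rightarrow \uc$ would force $\fan \Rightarrow \mr{DECO}$, which is not available. The obstruction shows up concretely in your candidate
\[
M(\a) = \min\brac{n \mid \forall \b\para{\f{\a}{n} = \f{\b}{n} \Rightarrow F(\a) = F(\b)}}.
\]
The set whose minimum is taken is defined by an unbounded universal quantifier over $\X$, hence is not decidable, and even with a known upper bound $n$ (from pointwise continuity) one would still have to decide, for each $k \le n$, whether $F_{\f{\a}{k}}$ is constant --- and that decidability is exactly $\mr{DECO}$. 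So the ``continuity of the minimal modulus'' lemma you want to isolate is not merely technically demanding; it is unprovable without the extra hypothesis that separates $\fan$ from $\uc$. The paper sidesteps all of this by working directly with the bar: for a detachable bar $B$, the function $F$ of~\eqref{functionn}, $F(\a) = \min\brac{n \mid \f{\a}{n} \in B}$, \emph{is} constructively well-defined (detachability of $B$ makes the set decidable and $B$ being a bar makes it inhabited), and the key observation is that this particular $F$ is a pointwise continuous modulus \emph{of itself}. Then $\muc$ gives uniform continuity of $F$ directly, Lemma~\ref{bounded} gives boundedness, and $B$ is a uniform bar --- no detour through $\uc$ and no general modulus-construction is needed.
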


\begin{proof} ``$\Rightarrow$'' Let $B$ be a detachable bar.
Then the function $F$, given by \eqref{functionn},
is a pointwise continuous modulus of itself, which implies that 
$F$ is uniformly continuous and therefore bounded.
Thus, $B$ is a uniform bar.

\bigskip

``$\Leftarrow$'' Fix $F,M:\X \to \N$ and assume that $M$ is a pointwise continuous modulus of 
$F$.
By Lemma \ref{bbaarr}, the set 
\[
B = \brac{u \mid M(u) \le\av{u}}
\]
is a bar and therefore, in view of $\fan$, a uniform bar.
Thus, there exists $N$ such that
\[
\forall \a \, \exists n \le N \para{ M(\f{\a}{n}) \le n }.
\]
This implies
\[
\forall \a \para{ F_{\f{\a}{N}} \text{ is constant}}.
\]
By Lemma \ref{charactpcuc}, we can conclude that $F$ is uniformly continuous.
\end{proof}

The axiom $\muc$ and Proposition \ref{stier} are taken from
\cite{the_fan_pale}.
For more about uniform continuity and $\fan$ see
\cite{dbfr}.

\section{The fan theorem for c-sets}

In the previous section, we have matched
uniform continuity with the fan theorem by 
replacing $\uc$ with its weaker version $\muc$.
Another option is to strengthen
$\fan$ by relaxing the requirement that the bar be detachable.
This leads to the axiom $\cfan$, which was introduced in \cite{the_logical_pale}.

\begin{lemma}\label{charinnen} For all subsets $A$ of $\Xs$ an all $u$ we have
\begin{enumerate}[(a)]
\item $u \in A^\circ \Leftrightarrow A_u = \Xs$ and
\item $(A_u)^\circ=(A^\circ)_u$.
\end{enumerate}
\end{lemma}

\begin{proof} In order to prove (b), fix $u$, $v$ and $w$ and note that
\[
w \in (A_u)^\circ \Leftrightarrow 
\forall v \para{w * v \in A_u} \Leftrightarrow 
\]
\[
\forall v \para{u * (w * v) \in A} \Leftrightarrow 
\forall v \para{(u * w) * v \in A} \Leftrightarrow 
\]
\[
u * w \in A^\circ \Leftrightarrow 
w \in (A^\circ)_u.
\]
\end{proof}

\begin{description}
\item[$\cfan$] Every\index{fan theorem for c-sets} bar which is a c-set is a uniform bar.
\end{description}

We introduce the auxiliary axioms $\mr{DEFU}$ (\textbf{de}cidability whether certain bars are \textbf{fu}ll)
and $\mr{DECO}$ (\textbf{de}cidability whether certain functions are \textbf{co}nstant).

\begin{description}
\item[$\mr{DEFU}$] Let $D$ be a detachable subset of 
$\Xs$ such that $D^\circ$ is a bar. Then,
$$\exists u \para{u \notin D}  \lor \lnot \exists u \para{u \notin D}.$$
\item[$\mr{DECO}$] Let $F:\X \to \N$ be pointwise continuous. Then,
 $$\exists \a, \b \para{F(\a) \ne F(\b)}  \lor
 \lnot \exists \a, \b \para{F(\a) \ne F(\b)}. $$
\end{description}

\begin{lemma}\label{FOCO}
$\mr{DEFU}  \Leftrightarrow  \mr{DECO}$
\end{lemma}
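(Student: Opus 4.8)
The plan is to connect the two decidability statements via the standard translation between detachable bars and pointwise continuous functions that already appears in the excerpt: a bar $B$ gives rise to the function $F(\a) = \min\{n \mid \f{\a}{n}\in B\}$ used in Propositions \ref{ucfan} and \ref{stier}, and conversely a pointwise continuous $F$ gives rise to the bar $\{u \mid F(u)\le\av{u}\}$ used in Lemma \ref{bbaarr}. Under this correspondence, the disjunct ``$F$ is constant versus not'' should line up with ``$D^\circ = \Xs$ versus not'', i.e. with ``$\lnot\exists u(u\notin D)$ versus $\exists u(u\notin D)$''.

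For the direction ``$\mr{DEFU}\Rightarrow\mr{DECO}$'': given a pointwise continuous $F:\X\to\N$, I would first replace $F$ by the function $G$ with $G(\a)=0$ if $F(\a)=F(\b)$ for the witnessing neighbourhood, more concretely work with the bar $B=\{u \mid F_u \text{ is constant with value } F(\o)\}$ — but detachability is the subtle point here (we cannot in general decide $F_u$ is constant), so instead I would use the $\min$-style bar. The cleanest route: from $F$ pointwise continuous, form $D = \{u \mid \text{not } F(u*0)=F(\o) \text{ is refuted by length } \av{u}\}$... Rather than fight this, I expect the actual argument to run: define $F' (\a) = 0$ iff $F(\a) = F((0,0,0,\dots))$; then $F'$ is pointwise continuous, $F'$ is constant ($\equiv 0$) iff $\lnot\exists\a,\b(F(\a)\ne F(\b))$, and the bar $D^\circ$ attached to $F'$ (with $D$ detachable, $D^\circ$ a bar by pointwise continuity of $F'$ together with Lemma \ref{bbaarr}) satisfies $D^\circ = \Xs$ iff $F'\equiv 0$. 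Applying $\mr{DEFU}$ to this $D$ and unwinding via Lemma \ref{charinnen}(a) ($\o\in D^\circ \Leftrightarrow D_\o = \Xs \Leftrightarrow \forall u(u\in D)$) gives the $\mr{DECO}$ dichotomy.

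For the direction ``$\mr{DECO}\Rightarrow\mr{DEFU}$'': given detachable $D$ with $D^\circ$ a bar, I would define $F:\X\to\N$ by $F(\a) = \min\{n \mid \f{\a}{n}\notin D\}$ when such $n$ exists and $F(\a)=0$ otherwise — but again totality/continuity needs care. Better: set $F(\a) = 1$ if $\exists n(\f{\a}{n}\notin D)$ witnessed, else $0$; since $D^\circ$ is a bar one checks $F$ is pointwise continuous, and $\exists\a,\b(F(\a)\ne F(\b))$ is equivalent to $\exists u(u\notin D)$ while its negation is equivalent to $\lnot\exists u(u\notin D)$ (using detachability of $D$ to pass between ``$\f\a{n}\notin D$'' and ``$\exists u(u\notin D)$''). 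Then $\mr{DECO}$ applied to this $F$ yields exactly the $\mr{DEFU}$ disjunction.

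The main obstacle I anticipate is not the logical bookkeeping but ensuring that the functions built in both directions are genuinely \emph{pointwise continuous} and \emph{total} using only the hypotheses at hand (a detachable $D$, or a pointwise continuous $F$), and that $D^\circ$ is correctly seen to be a bar — here I would lean on Lemma \ref{bbaarr} and Lemma \ref{charinnen}. Once the correspondence $F' \leftrightarrow D$ is set up so that ``$F'$ constant'' $\Leftrightarrow$ ``$D^\circ=\Xs$'' $\Leftrightarrow$ ``$\lnot\exists u(u\notin D)$'', both implications are immediate applications of the respective axiom.
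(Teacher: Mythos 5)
Your high-level plan — translate between a detachable $D$ with $D^\circ$ a bar and a pointwise continuous $F$, and match ``$D^\circ$ is full'' with ``$F$ is constant'' — is the right one and is what the paper does. However, both concrete constructions you sketch have real problems.

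For $\mr{DEFU} \Rightarrow \mr{DECO}$ you never actually produce a detachable set $D$. You vacillate between a bar of the form $\{u \mid F_u \text{ constant}\}$ (not detachable), a bar from Lemma \ref{bbaarr} (not obviously of the form $D^\circ$ for detachable $D$), and a set described only as ``the bar $D^\circ$ attached to $F'$''. The missing ingredient is a clean \emph{detachable} $D$ with $D^\circ$ a bar whose fullness records constancy of $F$. The paper takes
$D = \{u \mid F(u) = F(u*(1))\}$,
which is detachable because equality of natural numbers is decidable; $D^\circ$ is a bar by Lemma \ref{charactpcuc} (since $F_{\f{\a}{n}}$ constant forces $\f{\a}{n}\in D^\circ$); and one checks $\exists u\, (u\notin D) \Leftrightarrow \exists \a,\b\,(F(\a)\ne F(\b))$, using the observation that $\forall u\,(F(u)=F(u*(0))=F(u*(1)))$ propagates $F(\o)$ to every $u$, hence (by pointwise continuity) to every $\a$. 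None of your candidate $D$'s makes detachability visible, which is the crux of this direction.

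For $\mr{DECO} \Rightarrow \mr{DEFU}$ your stated equivalence ``$\exists \a,\b\,(F(\a)\ne F(\b))$ is equivalent to $\exists u\,(u\notin D)$'' is false for your $0$/$1$-valued $F$. Take $D = \{u \mid \av{u} \ge 2\}$: then $D^\circ = D$ is a bar, $\o \notin D$ so $\exists u\,(u\notin D)$, yet every path hits $\Xs\setminus D$ at stage $0$, so your $F$ is constantly $1$ and $\lnot\exists\a,\b\,(F(\a)\ne F(\b))$. Knowing that $F$ is constant does not by itself tell you whether $D$ is full; you must also inspect the constant value. The paper sets $F(\a)=\min\{n \mid \forall m\ge n\,(\f{\a}{m}\in D)\}$, so that $\exists u\,(u\notin D) \Leftrightarrow \exists\a\,(F(\a)>0)$; after $\mr{DECO}$ decides whether $F$ is constant, one either already has a nonzero value (hence $\exists u\,(u\notin D)$), or $F$ is constant and the single value $F(\mathbf 0)$ can be computed and tested for being $0$ or positive. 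Your version needs exactly the same repair (decide constancy, then evaluate $F$ somewhere), and the equivalence as you wrote it cannot be cited as is.
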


\begin{proof} ``$\Rightarrow$'' Fix a pointwise continuous function $F : \X \to \N$.
Set 
\[
D = \brac{ u \mid F(u)=F(u*(1))}.
\]
By Lemma \ref{charactpcuc}, $D^\circ$ is a bar.
Note that
\[
\exists u \para{u \notin D} \Leftrightarrow \exists  \a,\b \para{F(\a) \ne F(\b)}.
\]

``$\Leftarrow$'' Fix $D \subseteq \Xs$ and suppose that $D^\circ$ is a bar. Define
\[
F(\a) = \min \brac{n \mid \forall m \ge n \para{\f{\a}{m} \in D}}
\]
and note that
\begin{equation}\label{dumm}
\exists u \para{u \notin D} \Leftrightarrow \exists  \a \para{F(\a) > 0 )}.
\end{equation}
In presence of $\mr{DECO}$, the right hand side of \eqref{dumm} is decidable, which implies that
the left hand side of \eqref{dumm} is decidable.
\end{proof}

The diagram in the next proposition sheds some light on the relationship
between bars and functions. Its core part is the equivalence
between $\cfan$ and $\uc$. 

\begin{proposition}\label{pic2}
\[
\begin{array}{ccccccc}
  \wkl & \Rightarrow &\fan \land\mr{DEFU} & \Leftrightarrow &\cfan&\Rightarrow&\fan \\
  &&&&&&\\
  
  & & \Updownarrow &&  \Updownarrow && \\
    &&&&&&\\
& &\fan \land \mr{DECO} & \Leftrightarrow&\uc &&
\end{array}
\]
\end{proposition}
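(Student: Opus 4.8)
The plan is to verify each arrow in the diagram separately, reusing results already established. The implication $\wkl \Rightarrow \fan$ is Corollary \ref{zitty}, so for the top-left arrow it remains to derive $\mr{DEFU}$ from $\wkl$. Given a detachable $D$ with $D^\circ$ a bar, consider the tree $T = \{u \mid \f{u}{k} \notin D \text{ for some } k \le \av u\}$ of finite sequences having an initial segment outside $D$ — or more directly, analyse the tree of sequences all of whose sufficiently-long restrictions witness membership in $D^\circ$; since $D^\circ$ is a bar, an appropriate tree is finite exactly when $D = \Xs$, and $\wkl$ (equivalently $\llpo$, by Proposition \ref{notre}) lets us decide this. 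The cleanest route is: the statement $\exists u (u \notin D)$ is $\Sigma^0_1$, and using $F(\a) = \min\{n \mid \forall m \ge n (\f{\a}{m} \in D)\}$ as in Lemma \ref{FOCO}, $\neg\exists u(u \notin D)$ is equivalent to $F \equiv 0$; under $\wkl$ one has $\llpo$, and I would show $\llpo$ suffices to decide this particular $\Sigma^0_1$/$\Pi^0_1$ alternative because the relevant tree $T'$ (from Proposition \ref{uxuxux}) is finite or infinite according to the two cases, and $\wkl!$-style reasoning applies since $D^\circ$ being a bar forces at-most-one-path behaviour.

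For the central equivalences: $\mr{DEFU} \Leftrightarrow \mr{DECO}$ is exactly Lemma \ref{FOCO}, giving the vertical arrows once I show $\fan$ is preserved — but $\fan$ appears as a conjunct on both sides, so the two-sided vertical arrow $\fan \land \mr{DEFU} \Leftrightarrow \fan \land \mr{DECO}$ is immediate from Lemma \ref{FOCO}. The heart of the proposition is $\fan \land \mr{DEFU} \Leftrightarrow \cfan$ and $\uc \Leftrightarrow \fan \land \mr{DECO}$; by Lemma \ref{FOCO} these two biconditionals are equivalent to each other, so it suffices to prove one of them, say $\uc \Leftrightarrow \fan \land \mr{DECO}$. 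The direction $\uc \Rightarrow \fan$ is Proposition \ref{ucfan}, and $\uc \Rightarrow \mr{DECO}$ follows from the first Lemma of Section \ref{poo} (a uniformly continuous $F$ satisfies $F \equiv 0 \lor \neg(F \equiv 0)$, and more generally $F$ uniformly continuous makes $\exists \a,\b(F(\a) \ne F(\b))$ decidable by checking finitely many $u \in \zo^N$). For the converse, given pointwise continuous $F$, use $\fan$ via $\muc$ (Proposition \ref{stier}): the modulus of $F$ is itself pointwise continuous, so by Lemma \ref{bbaarr} the set $\{u \mid M(u) \le \av u\}$ is a bar, hence a uniform bar under $\fan$; this bounds the modulus and shows $F_{\f{\a}{N}}$ is constant for all $\a$ with a uniform $N$ — but to invoke Lemma \ref{charactpcuc}(b) one needs to know, for each $u \in \zo^N$, whether $F_u$ is constant, which is precisely what $\mr{DECO}$ (applied to each $F_u$, also pointwise continuous) supplies. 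Then $\exists N \forall u \in \zo^N (F_u \text{ constant})$ is obtained and Lemma \ref{charactpcuc} yields uniform continuity.

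Finally, $\cfan \Rightarrow \fan$ is immediate since every detachable bar is a c-set (take $D$ to be the bar's closure under extension, or more simply note a detachable $B$ equals $D^\circ$ for a suitable detachable $D$ built from $B$). The main obstacle I anticipate is the implication $\wkl \Rightarrow \mr{DEFU}$: one must pin down the exact tree whose finiteness/infiniteness encodes $D = \Xs$ versus $D \ne \Xs$ and confirm it has at most one path (so that $\wkl!$, i.e. $\fan$, already suffices, or that plain $\llpo$ does), being careful that "$D^\circ$ is a bar" is used essentially — without it the decidability can fail. The other potential subtlety is the $\mr{DECO}$ step in $\fan \land \mr{DECO} \Rightarrow \uc$: $\mr{DECO}$ must be applied not to $F$ itself but to each restriction $F_u$, and one should check these remain pointwise continuous (they do, trivially) and that finitely many applications of a decidability principle are harmless.
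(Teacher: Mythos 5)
Your overall plan — verify each arrow, leaning on Lemma \ref{FOCO} for the vertical equivalences, Corollary \ref{zitty} for $\wkl\Rightarrow\fan$, and Proposition \ref{ucfan} plus Lemma \ref{charactpcuc} for $\uc\Rightarrow\fan\land\mr{DECO}$ — matches the paper's decomposition. The treatment of $\cfan\Rightarrow\fan$ is also essentially the paper's (reduce to a detachable bar closed under extension, which equals its own interior and is hence a c-set). However, two of your arrows contain genuine errors.

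First, your argument for $\fan\land\mr{DECO}\Rightarrow\uc$ routes through $\muc$ by asserting that the modulus of a pointwise continuous $F$ is itself pointwise continuous. This is not given, and cannot be: if every pointwise continuous $F$ had a pointwise continuous modulus, then $\muc$ (equivalent to $\fan$ by Proposition \ref{stier}) would already yield $\uc$, making $\mr{DECO}$ redundant and collapsing the very hierarchy this proposition establishes. A modulus extracted by countable choice from the pointwise-continuity hypothesis has no continuity guarantee. The paper avoids this entirely: it sets $B=\brac{u\mid F_u\text{ is constant}}$, uses $\mr{DECO}$ (applied to each $F_u$, which is pointwise continuous) to show $B$ detachable, uses pointwise continuity of $F$ and Lemma \ref{charactpcuc}(a) to show $B$ a bar, and then applies $\fan$; Lemma \ref{charactpcuc}(b) gives uniform continuity. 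Your closing remark about applying $\mr{DECO}$ to each $F_u$ is the right ingredient, but your surrounding argument tries to obtain the bound $N$ from a bounded modulus rather than from $\fan$ applied to $B$, and that route is blocked.

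Second, for $\wkl\Rightarrow\mr{DEFU}$ you suggest that $\wkl!$-style reasoning (i.e.\ $\fan$) suffices because ``$D^\circ$ being a bar forces at-most-one-path behaviour.'' This is unsupported and, again, cannot be true in general: $\fan\Rightarrow\mr{DEFU}$ would give $\fan\Rightarrow\cfan$, contradicting the intended separation between the two. The paper uses full $\wkl$: it defines the infinite tree $T$ consisting of those $u$ whose proper initial segments up to $\av u$ witness at least as much membership in $D$ as any other $w$ of equal or smaller length, so that a $\wkl$-path $\alpha$ of $T$ tracks a ``worst case'' branch, reducing $\exists u\,(u\notin D)$ to $\exists n\,(\f{\a}{n}\notin D)$; since $D^\circ$ is a bar, some $\f{\a}{N}\in D^\circ$, and the latter existential is then a finite search, hence decidable. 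Your $F(\a)=\min\brac{n\mid\forall m\ge n\,(\f{\a}{m}\in D)}$ from Lemma \ref{FOCO} would transfer the problem to deciding $F\equiv 0$, but this does not close the argument without the tree construction. Finally, your remark that the two horizontal biconditionals are ``equivalent to each other by Lemma \ref{FOCO}, so it suffices to prove one'' is circular: Lemma \ref{FOCO} identifies $\fan\land\mr{DEFU}$ with $\fan\land\mr{DECO}$, but equating the two biconditionals additionally requires $\cfan\Leftrightarrow\uc$, which is part of what is being proved. Both $\fan\land\mr{DEFU}\Rightarrow\cfan$ and $\fan\land\mr{DECO}\Rightarrow\uc$ (together with the converses) need to be established, as the paper does.
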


\begin{proof} ``$\wkl \Rightarrow \mr{DEFU}$'' Let $D$ be a detachable subset $D$ of $\Xs$ such that $D^\circ$ is a bar.
Define an infinite tree $T$ by
\[
u \in T \Leftrightarrow 
 \forall w \para{%
   \av{w} \le \av{u} \land w \notin D \Rightarrow \exists k \le \av{w} \para{\f{u}{k} \notin D} }.
\]
For each $u \in T$ of length $n$ we have
\[
\brac{\f{u}{k} \mid k \le n }  \subseteq D \Leftrightarrow \brac{w \mid \av{w} \le n } \subseteq D.
\]
Let $\a$ be a path of $T$.
The following are equivalent:
\begin{enumerate}[(i)]
\item $\exists u \para{u \notin D}$
\item $\exists n \para{\f{\a}{n} \notin D}$  
\end{enumerate}
There exists $N$ such that $\f{\a}{N} \in D^\circ$.
Thus, $(ii)$ is decidable, which yields the decidability of $(i)$.

\bigskip

``$\fan \land\mr{DEFU}  \Rightarrow \cfan$'' Fix a detachable subset $D$ of 
$\Xs$ such that $D^\circ$ is a bar.
We show that  $D^\circ$ is detachable.
To this end, fix some $u$. Note that
$(D^\circ)_u$ is a bar and therefore
$(D_u)^\circ$ is a bar.
We can conclude that
\[
D_u = \Xs \lor \exists w \para{w \notin D_u},
\]
which implies
\[
u \in D^\circ  \lor u \notin D^\circ.
\]

\bigskip

``$\fan \land \mr{DECO}  \Rightarrow \uc$''
Let $F: \X \to \N$ be pointwise continuous.
Set
\[
B = \brac{u \mid F_u \text{ is constant}}.
\]
In view of $\mr{DECO}$, the set $B$ is detachable.
Since $F$ is pointwise continuous, $B$ is a bar.
Thus, $B$ is a uniform bar, which implies the
uniform continuity of $F$.

\bigskip

``$\uc \Rightarrow \fan \land \mr{DECO}$'' The implication
$\uc \Rightarrow \fan$ 
holds by Proposition \ref{ucfan}. The implication $\uc \Rightarrow \mr{DECO}$ follows
from Lemma $\ref{charactpcuc}$.

\bigskip

``$\cfan  \Rightarrow \fan$'' Let $B$ be a detachable bar which is closed under extension.
Then $B = B^\circ$, which implies that $B$ is a c-set and therefore, in presence of $\cfan$,  a uniform bar.

\end{proof}

Another equivalent of 
$\cfan$ is the so-called anti-Specker
theorem;
see \cite{berger_bridges_a_fan_pale}.
For more on the fan theorem, see \cite{ludin} and \cite{wim}.

\bibliographystyle{plain}
\bibliography{fan.bib} 


\end{document}